 \newtheorem{theorem}{Theorem}[section]
 \newtheorem{corollary}[theorem]{Corollary}
 \newtheorem{proposition}[theorem]{Proposition}
\newtheorem{problem}[theorem]{Problem}
\theoremstyle{definition}
\theoremstyle{remark}
\newtheorem{example}[theorem]{Example}
\newtheorem{fact*}{Fact}
\newcommand{\hilbert}{\mathcal{H}}
\newcommand{\DC}{\tilde{''}}
\newcommand{\BH}{\mathbf{B}(\mathcal{H})}
\newcommand{\til}{\raise.17ex\hbox{$\scriptstyle\mathtt{\sim}$}}
\newcommand\beq{\begin{equation}}
\newcommand\eeq{\end{equation}}
\newcommand{\bbm}{\left[ \begin{smallmatrix}}
\newcommand{\ebm}{\end{smallmatrix} \right]}
\newcommand{\bpm}{\left( \begin{matrix}}
\newcommand{\epm}{\end{matrix} \right)}
\numberwithin{equation}{section}
\newlength{\Mheight}
\newlength{\cwidth}
\newcommand{\dfn}[1]{{\bf #1}\index{#1}}
\title[Invariant structure preserving functions]{Invariant structure preserving functions and an Oka-Weil Kaplansky density type theorem}
\author{
J. E. Pascoe 
}
\date{\today}
\thanks{$\ddagger$ Research supported by National Science Foundation DMS grant \#1953963.}
\subjclass[2020]{46L52, 47A15}
\keywords{Noncommutative function theory, invariant structure preserving functions, Oka-Weil theorem, Kaplansky density theorem}
\begin{document}

\begin{abstract}
	We develop the theory of invariant structure preserving and free functions on a general structured topological space.
	We show that an invariant structure preserving function is pointwise approximiable by the appropriate analog of polynomials in the strong topology and therefore a free  function.
	Moreover, if a domain of operators on a Hilbert space is polynomially convex,
	 the set of free functions satisfies a Oka-Weil Kaplansky density type theorem-- contractive functions can be approximated by contractive polynomials.
\end{abstract}
\maketitle

\tableofcontents

	\section{Introduction}

	We prove several results.
	Here are some interesting consequences of those results:
	\begin{enumerate}
		\item Let $\mathcal{H}$ be a topological vector space. Let $Y:\mathcal{H}\rightarrow\mathcal{H}$ be continuous on $\mathcal{H}$ and let
		$\mathbb{X}$ be a collection of operators on $\mathcal{H}.$ Then,
		$Y$ is in the strong operator topology closure of the unital algebra generated by the elements of $\mathbb{X}$ if and only if 
		for every $n\in \mathbb{N},$ if a subspace $S \subseteq \mathcal{H}^n$ is invariant for
		$X^{\oplus n}$ for all $X \in \mathbb{X}$ then it is invariant for 
		 $Y^{\oplus n}.$
		Radjavi and Rosenthal \cite{RadRos} proved the above result in the case of $\mathcal{H}$ a Hilbert space and $Y$ was an operator.
		This is a special case of our equidynamical approximation theorem, which can be viewed as analogous to the double
		 commutatant theorem. See Theorem \ref{equidyn}.
		\item
		Let $F \subseteq \mathbb{R}$ contain arbitrarily large numbers.
		Let $f:F \rightarrow \mathbb{R}$ be a function such that if $\mathscr{C}$ is a closed convex cone in $\mathbb{R}^n$ and $x\in F^n$ such that 
		$(x_ic_i)_{i=1,\ldots, n} \in \mathscr{C}$ when $c\in \mathscr{C},$ then $(f(x_i)c_i)_{i=1,\ldots, n} \in \mathscr{C}.$
		Then, $f$ analytically continues to an entire function with nonnegative
		power series coefficients at $0.$ See Theorem \ref{equifun} in light of Example \ref{exampleschoenberg}.
		Similarly, if we replace cones by convex sets, the sum of the power series coefficients must in addition sum to $1.$
		See Example \ref{exampleschoenberg2}.
		The result for cones is essentially a poor version of Schoenberg's theorem \cite{schoenberg1942,Belton2019,Belton2019a,fitzy}:
		a function $f:\mathbb{R}\rightarrow \mathbb{R}$ such that for any positive semidefinite matrix $[m_{ij}]_{i,j}$
		we have that $[f(m_{ij})]_{i,j}$ is again positive semidefinite must be an entire function with positive power series coefficients.
		A point of contrast is that, although our assumption is stronger in that we require arbitrary cones, we do not require $F$ to be open. 
		Moreover, we can completely characterize the functions which preserve invariance of cones to some fixed level $N,$ which contrasts
		with the classical Schoenberg theory where such problems are open. See Example \ref{exampleschoenberg3}.
		
		Thus, the following problem (which we do not solve here) is motivated: \emph{Give an elementary proof that positivity preservation in the Schoenberg sense implies
		invariant cone preservation in our sense.} That is, does the Schoenberg theorem require hard analysis?
		(For example, every proof known to the author involves delicately taking derivatives and then applying some kind of Bernstein type theorem.)
		Does the Schoenberg theorem hold on discrete sets, such as $\mathbb{Z}?$
		
		\item A free noncommutative function on a large enough domain has values contained in closure of the algebra generated by the coordinates of the input.
		See Theorem \ref{mainresult}.
		\item A contractive free noncommutative function on a polynomially convex domain can be approximated by contractive polynomials.
		Importantly, we eliminate the hypothesis that the domain contain a scalar or matricial point as was assumed in \cite{AGMCOP, BMV16}.
		See Theorem \ref{biggerthm}.
	\end{enumerate}
	
	Free noncommutative function theory (and the special case of commutative multivariable operator theory)
	arose as a way to understand the functional calculus for several noncommuting operators on a Hilbert space.
	Fundamentally, one can see the established effort as a ``complete" analog of real and complex analysis.
	(Here we use the word \emph{complete} in the sense of ``complete positivity" or ``complete boundedness" from operator theory.)
	The raw power of ``completeness" has generated significant theoretical interest
	both classically \cite{lo34, kraus36, choikraus, pau86, pau02, schoenberg1942}
	and in modern times \cite{agmcbook2, vvw12, BMV16, BMV162, po89, po91, popescuhardy, po08, muhlysolel, muhlysolel2}, as well as
	applications to complex analysis \cite{agmcbook1, agmcbook2},
	free probability \cite{voi04, voi10, pastdcauchy, pptdherglotz, jekel},
	sums of squares certificates for positivity \cite{schmu91, put93, heltonPositive, helkm12, pascoeratl},
	quantum information theory \cite{helton2017tracial, klep2008connes, pascoeisrael},
	number theory \cite{pascoeTrace},
	the theory of change of variables with an eye toward robust control \cite{helkm11, pascoemathz, plush1, plush2},
	and the theory of spectrahedra from semidefinite programming and matrix convexity \cite{heltmc12, augat2018bianalytic, kennyd, davidson2017dilations}.
	
	However, while the specifics of any given problem will eventually probably involve at least some actual operator theory or Hilbert space theoretic methods,
	for signficant parts of the ``complete" backend, the paradigm can be shifted to a general theory of endomorphisms of some object with a ``complete" structure. 
	(Operators on a Hilbert space are, of course, endomorphisms of that Hilbert space.)
	Our main study, Section \ref{weakalgsec} gives a radical new foundation for a reimagining of free noncommutative function theory for endomorphisms of a structured topological space.
	Typically, the idea in free noncommutative function theory so far has been to consider maps which preserve either direct sums and similarities or, equivalently, intertwiners.
	While, in the end, we do consider such interwiner-preserving maps, it turns out that maps which preserve invariance of structures are more paramount and have good approximation theory.
	(In the case of operators, we want to consider functions such that is $S$ is an invariant subspace for $X^{\oplus n},$ then it is invariant for $f(X)^{\oplus n}.$) 
	We also somewhat dispense with the idea of ``levels" which is pervasive in free noncommutative function theory.
\section{Weak algebraicity} \label{weakalgsec}
	\subsection{Structured topological spaces}
	A \dfn{structured topological space} $(\mathcal{H}, \mathscr{S})$ is a topological space $\mathcal{H}$
	equipped with a \dfn{structure} $\mathscr{S} \subseteq \bigcup P( \mathcal{H}^{n})$
	satisfying
	\begin{enumerate}
		\item $S \in \mathscr{S} \Rightarrow \overline{S}\in \mathscr{S},$
		\item the block diagonal stucture $\{(v_{i_1}, \ldots, v_{i_n}) | v_i \in \mathcal{H}\}$ for each sequence $i_1, \ldots, i_n.$
	\end{enumerate}
	We say a structure is \dfn{regular} if it is closed under arbitrary intersections. We will often refer to $S \in \mathscr{S}$ as \dfn{structured}.

	\begin{example}
		Take $\mathcal{H}$ be a topological space and let $\mathscr{S}$
		be such that any for any $n \in \mathbb{N}$ and $V \subseteq \mathcal{H}^n$, we have $V \in \mathscr{S}.$ Then,
		$(\mathcal{H}, \mathscr{S})$ is a structured topological space. We call this the \dfn{discrete structure} on $\mathcal{H}.$
	\end{example}
	\begin{example}
		Take $\mathcal{H}$ be a topological group and let $\mathscr{S}$
		be such that any for any $n \in \mathbb{N}$ and $V \subseteq \mathcal{H}^n$
		a subgroup, we have $V \in \mathscr{S}.$ Then,
		$(\mathcal{H}, \mathscr{S})$ is a structured topological space. We call this the \dfn{group structure}.
	\end{example}
	\begin{example}
		Take $\mathcal{H}$ to be a topological vector space and let $\mathscr{S}$
		be such that any for any $n \in \mathbb{N}$ and $V \subseteq \mathcal{H}^n$
		a vector subspace, we have $V \in \mathscr{S}.$ Then,
		$(\mathcal{H}, \mathscr{S})$ is a structured topological space.
		We call this the \dfn{vector space structure}.
	\end{example}
	\begin{example}
		Take $\mathcal{H}$ to be a topological
		space and let $\mathscr{S}$ be the structure containing only
		the block diagonal structures. We call this the \dfn{trivial structure} on $\mathcal{H}.$
	\end{example}

	Let $(\mathcal{H},\mathscr{S})$ be a structured topological spaces.
	We let $End(\mathcal{H})$ denote the \dfn{endomorphisms of $\mathcal{H},$}
	the set of continuous maps $X: \mathcal{H} \rightarrow \mathcal{H}$ such that
	$$S \in \mathscr{S} \cap P(\mathcal{H}^{n+m+1})
	\Rightarrow (\textrm{id}_{\mathcal{H}}^{\oplus m} \oplus X\oplus \textrm{id}_{\mathcal{H}}^{\oplus n})S \in \mathscr{S},$$
	where $\textrm{id}_{\mathcal{H}}$ denotes the identity map on $\mathcal{H}.$

	Let $(\mathcal{H},\mathscr{S})$ be  a structured topological space.
	Let $f: (\mathcal{H}^n)^{\Lambda} \rightarrow \mathcal{H}^n$
	be a function such that, for any $S \in \mathscr{S}\cap P(\mathcal{H}^n),$
	if $a=(a_\lambda)_{\lambda\in \Lambda} \in  S\in \mathscr{S}$
		then $f(a)\in S.$
	We call such an $f$ an \dfn{operation.}
	The following proposition justifies the use of the name ``endomorphism."
	\begin{proposition}
		Let $(\mathcal{H},\mathscr{S})$ be a structured topological space.
		Let $f: (\mathcal{H}^2)^{\Lambda} \rightarrow \mathcal{H}^2$ be an operation.
		For any $X\in End(\mathcal{H}),$ and $(a_\lambda)_{\lambda\in \Lambda}\in (\textrm{Graph } X)^{\Lambda},$  we have that $f(a) \in \textrm{Graph }X.$
	\end{proposition}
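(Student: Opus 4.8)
The plan is to reduce everything to a single observation: the graph of an endomorphism is a structured set, i.e.\ $\textrm{Graph } X \in \mathscr{S}$. Once that is in hand the proposition is immediate. Indeed $\textrm{Graph } X \in \mathscr{S}\cap P(\mathcal{H}^2)$, and the assertion $(a_\lambda)_{\lambda\in\Lambda}\in (\textrm{Graph } X)^{\Lambda}$ is precisely the assertion that $a_\lambda \in \textrm{Graph } X$ for every $\lambda$, which is exactly the hypothesis ``$a \in S$'' in the definition of an operation applied with $S = \textrm{Graph } X$; hence $f(a)\in \textrm{Graph } X$.

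So the real content is to show $\textrm{Graph } X\in\mathscr{S}$. First I would invoke axiom (2) in the definition of a structured topological space with the length-two constant sequence $i_1 = i_2 = 1$: this puts the diagonal $\Delta := \{(v,v) : v\in\mathcal{H}\}\subseteq\mathcal{H}^2$ into $\mathscr{S}$, and clearly $\Delta\in P(\mathcal{H}^2)$. Next I would apply the endomorphism property of $X$ to $\Delta$: writing $2 = n+m+1$ with $m = 1$ and $n = 0$, the ampliation $\textrm{id}_{\mathcal{H}}^{\oplus m}\oplus X \oplus \textrm{id}_{\mathcal{H}}^{\oplus n}$ becomes $\textrm{id}_{\mathcal{H}}\oplus X : \mathcal{H}^2 \to \mathcal{H}^2$, so $X\in End(\mathcal{H})$ gives $(\textrm{id}_{\mathcal{H}}\oplus X)\Delta\in\mathscr{S}$. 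Finally I would compute $(\textrm{id}_{\mathcal{H}}\oplus X)\Delta = \{(v, Xv) : v \in \mathcal{H}\} = \textrm{Graph } X$, which finishes the argument.

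There is essentially no obstacle beyond bookkeeping. The only place to be mildly careful is the choice of $m$ and $n$: with $m = 0$, $n = 1$ one would instead land on the ``cograph'' $\{(Xv,v) : v\in\mathcal{H}\}$, which is also structured but is not what is wanted; the choice $m = 1$, $n = 0$ delivers the graph directly. It is also worth noting explicitly that the same computation shows $\textrm{Graph } X\in\mathscr{S}$ for \emph{every} $X\in End(\mathcal{H})$, which is the fact that genuinely justifies the terminology ``endomorphism'' and which the operation-preservation statement then exploits.
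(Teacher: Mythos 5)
Your proof is correct and matches the paper's argument essentially verbatim: both identify the block diagonal set $\{(h,h):h\in\mathcal{H}\}$ as structured, observe that $\textrm{Graph } X=(\textrm{id}_{\mathcal{H}}\oplus X)$ applied to it is structured by the endomorphism property, and then apply the defining property of an operation with $S=\textrm{Graph }X$. The only difference is that you spell out the bookkeeping of $m,n$ in the ampliation, which the paper leaves implicit.
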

	\begin{proof}
		Consider the block diagonal structure $S=\{ (h,h) | h\in\mathcal{H}\}.$
		Note that $\textrm{Graph }X= (\textrm{id}_{\mathcal{H}} \oplus X) S \in \mathscr{S}.$
		Suppose $(a_\lambda)_{\lambda\in \Lambda} \in \textrm{Graph }X.$
		Because $\textrm{Graph }X$ is structured, we have that $f(a)\in \textrm{Graph }X.$ 
	\end{proof}
	\begin{example}
		Let $(\mathcal{H},\mathscr{S})$ be  a topological group equipped with the group structure.
		Consider the operation group multiplication $f((h_1,k_1),(h_2,k_2)) = (h_1h_2,k_1k_2).$
		So, for $X \in End(\mathcal{H}),$ $X(h_1h_2) = Xh_1 Xh_2$ by the preceeding proposition. That is, $X$ is a group homomorphism.
	\end{example}
	\begin{example}
		Let $(\mathcal{H},\mathscr{S})$ be a topological vector space equipped with the vector space structure.
		Consider the operations vector addition $f((h_1,k_1),(h_2,k_2)) = (h_1+h_2,k_1+k_2)$ and $g_z((h,k))= (zh,zk)$ where $z$ is an element of the ground field.
		So, for $X \in End(\mathcal{H}),$ $X(h_1+h_2) = Xh_1 + Xh_2$ and  $Xzh=zXh$ by the preceeding proposition. That is, $X$ is a linear operator.
	\end{example}
	\begin{example}
		Let $(\mathcal{H},\mathscr{S})$ be a topological vector space over $\mathbb{R}$
		equipped with a structure $\mathscr{S}$ consisting of convex cones.
		Consider the operations vector addition $f((h_1,k_1),(h_2,k_2)) = (h_1+h_2,k_1+k_2)$ and $g_z((h,k))= (th,tk)$ where $t\geq 0$.
		So, for $X \in End(\mathcal{H}),$ $X(h_1+h_2) = Xh_1 + Xh_2$ and  $Xth=tXh$ by the preceeding proposition. Note that, $X$ is a linear operator
		since $Xh + X(-h) = 0.$
	\end{example}
	\begin{example}
		Let $(\mathcal{H},\mathscr{S})$ be a topological vector space over $\mathbb{R}$
		equipped with a structure $\mathscr{S}$ consisting of convex sets.
		Consider the convex combination operations  $f_t((h_1,k_1),(h_2,k_2)) = (th_1+(1-t)h_2,tk_1+(1-t)k_2)$ where $t \in [0,1].$
		So, for $X \in End(\mathcal{H}),$ $X(th_1+(1-t)h_2) = tXh_1 + (1-t)Xh_2$ by the preceeding proposition. Therefore, we see that
		$X$ is affine linear.
	\end{example}
	Generally, we see that algebraic structures are preserved by endomorphisms, \emph{even if we do not know what the operations are.}

	\begin{example}
		Let $(\mathcal{H},\mathscr{S})$ be a topological space equipped with the trivial structure.
		The set $End(\mathcal{H})$ consists of only the identity element, as the graph of an endomorphism must be in the structure.
	\end{example}
	\begin{example}
		Let $(\mathcal{H},\mathscr{S})$ be a topological space equipped with the discrete structure.
		The set $End(\mathcal{H})$ consists of all contiuous maps from $\mathcal{H}$ to $\mathcal{H}$
	\end{example}
	Equip $End(\mathcal{H})$ with the topology of pointwise convergence. (This is analogous to strong operator topology convergence.)
	A tuple $(\mathcal{T}, \mathcal{H}, \Lambda)$ is called a \dfn{space over $\mathcal{H}$} if $\mathcal{H}$ is a topological space and $\Lambda$
	is a collection of maps $\lambda: \mathcal{T}\rightarrow End(\mathcal{H})$ such the maps $\lambda$ separate points in $\mathcal{T}.$ We naturally endow 
	the set $\mathcal{T}$ with the weak topology.
	\begin{example}
		Take $\mathcal{H}$ to be a Hilbert space. Let $\mathcal{M}, \mathcal{N}$ also be Hilbert spaces.
		Let $\mathbf{B}(\mathcal{M} \otimes \mathcal{H}, \mathcal{N} \otimes \mathcal{H})$ denote the bounded linear operators
		from $\mathcal{M} \otimes \mathcal{H}$ to $\mathcal{N} \otimes \mathcal{H}.$
		Let $\mathcal{T} \subseteq \mathbf{B}(\mathcal{M} \otimes \mathcal{H}, \mathcal{N} \otimes \mathcal{H}).$
		For $\mu \in \mathcal{M}, \nu \in \mathcal{N}$ define $\lambda_{\mu,\nu}(X) = (\nu\otimes I_{\mathcal{H}})X(\mu\otimes I_{\mathcal{H}})$
		where $I_{\mathcal{H}}$ denotes the identity operator.
		Let $\Lambda$ be the collection of all such $\lambda_{\mu, \nu}.$
		Then, $(\mathcal{T}, \mathcal{H}, \Lambda)$ is a space over $\mathcal{H}.$
	\end{example}

	\begin{example}
	Take $\mathcal{H}$ to be a topological space.
	Then, $(End(\mathcal{H}), \mathcal{H}, \{\mathrm{id}_{\mathcal{H}}\})$ is a space over $\mathcal{H}.$ 
	\end{example}
	Given $\mathcal{H}$ a topological space, let $Fun(\mathcal{H})$ denote the \dfn{continuous functions from $\mathcal{H}$ to $\mathcal{H}.$}
	We endow $Fun(\mathcal{H})$ with the topology of pointwise convergence.
	Let $\mathbb{X} \subseteq Fun(\mathcal{H})^n.$
	We say $S \subseteq \mathcal{H}^{nm}$ is \dfn{jointly invariant} for $\mathbb{X}$ if $X^{\oplus m} S \subseteq S$ for all $X \in \mathbb{X}.$
	Given $(\mathcal{H},\mathscr{S})$ a structured topological space, $(\mathcal{T}, \mathcal{H}, \Lambda)$ a space over $\mathcal{H}$ and $X_1,\ldots, X_n \in \mathcal{T},$
	we say $S\subseteq \mathcal{H}^{nm}$ is \dfn{jointly invariant} for $\oplus^n_{i=1} X_i$ if $S$ is jointly invariant for the set
	$\{\oplus^n_{i=1} \lambda(X_i) | \lambda \in \Lambda\}.$
	\begin{example}
	Let $\mathcal{H}$ be a topological vector space.
	Take $(\mathcal{H},\mathscr{S})$ be the structured topological space vector space with structure coming from the vector space structure on $\mathcal{H}.$
	Note
	$(End(\mathcal{H}), \mathcal{H}, \{\mathrm{id}_{\mathcal{H}}\})$ is a space over $\mathcal{H}.$ 
	Given $X \in End(\mathcal{H}), \mathcal{H},$ an invariant subspace $S$ for $X^{\oplus n}$ gives an $S$ which is jointly invariant for $X.$
	\end{example}
	
	\subsection{Equidynamical approximation}
	Let $\mathbb{X} \subseteq Fun(\mathcal{H})^n.$ We define the \dfn{equi-invariants of $\mathbb{X}$,} denoted $\mathbb{X}\DC$
	to be the set of $Y \in Fun(\mathcal{H})^n$ such that if a closed set $S$ is jointly invariant for $\mathbb{X}$ then $S$ is jointly invariant for $Y.$
	Let $(\mathcal{T}, \mathcal{H}, \Lambda)$ a space over $(\mathcal{H},\mathscr{S}).$ 
	Given $X \in \mathcal{T}^n,$ we define the set $X\DC,$ the \dfn{equi-invariants of $X$,} denoted $X\DC$
	to be the set of equi-invariants for the set of equi-invariants for the set $\{\oplus^n_{i=1} \lambda(X_i) | \lambda \in \Lambda\}.$

	\begin{theorem}[Equidynamical approximation] \label{equidyn}
		Let $(\mathcal{H},\mathscr{S})$ be a structured topological space.
		Suppose $\mathscr{B}\subseteq End(\mathcal{H})$ contains the identity and is closed under composition.
		Moreover suppose that for every $v \in \mathcal{H}^n,$ the set $\{ X^{\oplus n}v | X \in \mathscr{B}\}\in \mathscr{S}.$
		Then, $\overline{\mathscr{B}} = \mathscr{B}\DC.$
		
		Namely, if $End(\mathcal{H})$ is closed in $Fun(\mathcal{H}),$  $\mathscr{B}\DC \subseteq End(\mathcal{H}).$
	\end{theorem}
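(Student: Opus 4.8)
The plan is to establish the two inclusions $\overline{\mathscr{B}}\subseteq\mathscr{B}\DC$ and $\mathscr{B}\DC\subseteq\overline{\mathscr{B}}$ separately; this is a topological transcription of the classical orbit-closure argument behind reflexivity theorems, with the ampliations $X\mapsto X^{\oplus m}$ playing the role of the ampliation trick in the Radjavi--Rosenthal theorem.

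For $\overline{\mathscr{B}}\subseteq\mathscr{B}\DC$, I would fix $Y\in\overline{\mathscr{B}}$ and a closed set $S\subseteq\mathcal{H}^{m}$ jointly invariant for $\mathscr{B}$, and choose a net $(X_\alpha)$ in $\mathscr{B}$ with $X_\alpha\to Y$ in the topology of pointwise convergence. For $w=(w_1,\dots,w_m)\in S$ one has $X_\alpha^{\oplus m}w=(X_\alpha w_1,\dots,X_\alpha w_m)\to(Yw_1,\dots,Yw_m)=Y^{\oplus m}w$ in the product topology on $\mathcal{H}^m$, since convergence there is coordinatewise and $m$ is finite; as $X_\alpha^{\oplus m}w\in S$ for all $\alpha$ and $S$ is closed, $Y^{\oplus m}w\in S$. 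Hence every closed set jointly invariant for $\mathscr{B}$ is jointly invariant for $Y$, that is, $Y\in\mathscr{B}\DC$. This direction uses only continuity of the elements of $\mathscr{B}$ and density.

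The reverse inclusion is the substantive one. Fix $Y\in\mathscr{B}\DC$. Since a basic neighbourhood of $Y$ in the topology of pointwise convergence is determined by finitely many points $v_1,\dots,v_k\in\mathcal{H}$ together with open sets about the values $Yv_i$, it suffices to show, for each $v=(v_1,\dots,v_k)\in\mathcal{H}^k$, that $Y^{\oplus k}v\in S:=\overline{\{X^{\oplus k}v : X\in\mathscr{B}\}}$. First, $S\in\mathscr{S}$, since the generating set $\{X^{\oplus k}v : X\in\mathscr{B}\}$ is structured by hypothesis and $\mathscr{S}$ is closed under taking closures. The key step is that $S$ is jointly invariant for $\mathscr{B}$: for $X,X'\in\mathscr{B}$ one has $X'^{\oplus k}(X^{\oplus k}v)=(X'X)^{\oplus k}v$, and $X'X\in\mathscr{B}$ because $\mathscr{B}$ is closed under composition, so $X'^{\oplus k}$ maps the dense subset $\{X^{\oplus k}v : X\in\mathscr{B}\}$ into $S$; since $X'\in End(\mathcal{H})$ is continuous, so is $X'^{\oplus k}$, and therefore it maps $S$ into $\overline{S}=S$. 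Thus $S$ is a closed set jointly invariant for $\mathscr{B}$, hence jointly invariant for $Y$ by the definition of $\mathscr{B}\DC$; and $v=\mathrm{id}_{\mathcal{H}}^{\oplus k}v\in S$ because $\mathrm{id}_{\mathcal{H}}\in\mathscr{B}$, so $Y^{\oplus k}v\in S$, which is exactly the desired approximation. Therefore $Y\in\overline{\mathscr{B}}$.

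The last assertion of the theorem is then immediate: if $End(\mathcal{H})$ is closed in $Fun(\mathcal{H})$ then, since $\mathscr{B}\subseteq End(\mathcal{H})$, the equality just proved gives $\mathscr{B}\DC=\overline{\mathscr{B}}\subseteq\overline{End(\mathcal{H})}=End(\mathcal{H})$. I expect the only genuine care to be needed in the two spots where topology enters: that ampliation $X\mapsto X^{\oplus k}$ is continuous for the pointwise-convergence topology, legitimising the reduction to a single tuple $v\in\mathcal{H}^k$, and that continuity of an endomorphism suffices to propagate invariance of the orbit $\{X^{\oplus k}v : X\in\mathscr{B}\}$ to its closure. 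The algebraic hypotheses that $\mathscr{B}$ contains the identity and is closed under composition, and the structure hypothesis, are used exactly at the points indicated above.
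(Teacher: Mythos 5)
Your proof is correct and follows the same orbit-closure strategy as the paper, with two worthwhile additions of detail: you prove the easy inclusion $\overline{\mathscr{B}}\subseteq\mathscr{B}\DC$ (which the paper silently omits), and you spell out why the \emph{closure} of the orbit $\{X^{\oplus k}v : X\in\mathscr{B}\}$ remains jointly invariant for $\mathscr{B}$ — namely, continuity of each $X'^{\oplus k}$ propagates invariance from the orbit to its closure — a step the paper's proof passes over when it moves from ``$S$ is jointly invariant'' to ``$Y^{\oplus n}v\in\overline{S}$''. You also correctly locate where $\mathrm{id}_{\mathcal{H}}\in\mathscr{B}$ and closure under composition are used. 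One small remark: under the paper's stated definition of $\mathscr{B}\DC$ (which quantifies over all closed jointly invariant sets, not just structured ones) your observation that $\overline{S}\in\mathscr{S}$ is not strictly needed for this theorem, though it is the right observation for the corollary that follows, where only structured closed sets are quantified over.
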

	\begin{proof}
		Let $Y \in \mathscr{B}\DC.$
		Pick a basic neighborhood of $Y.$ That is, fix $v_1, \ldots, v_n \in \mathcal{H}$ and neighborhoods $U_1,\ldots U_n$ of $Yv_1, \ldots, Yv_n$ and consider 
			$B= \{Z \in End(\mathcal{H})| Zv_i \in U_i\}.$
		Let $v = \oplus^n_{i=1} v_i.$
		Let $$S = \{ X^{\oplus n}v | X \in \mathscr{B}\}$$
		Note $S$ is jointly invariant for $\mathscr{B}.$
		Therefore, $Y^{\oplus n}v \in \overline{S}.$ So there is an $X \in \mathscr{B}$ such that $X^{\oplus n}v \in U_1 \times \ldots \times U_n.$
		That is, $Xv_i \in U_i.$ Thus, $B \cap \overline{\mathscr{B}} \neq \emptyset.$
	\end{proof}
	\begin{example}
		A continuous function $Y:\mathcal{H}\rightarrow \mathcal{H}$
		on some topological vector space is in the strong closure of the algebra generated by some operators $X_1,\ldots, X_n$ 
		if and only if any closed joint invariant subspace for the $X_i^{\oplus n}$ is invariant for $Y^{\oplus n}.$
		Here $\mathscr{B}$ in the equidynamical approximation theorem is the algebra generated by the $X_i.$

		One may view equidynamical approximation
		as an analogue of the von Neumann double commutant theorem. (In fact, one may use it to prove the double commutant theorem in conjunction with the spectral
		theorem.)
	\end{example}
	A key point is that the ``trivial invariants" with respect to a given structure characterize the endomorphisms of that structure.
	\begin{corollary}
		Let $(\mathcal{H},\mathscr{S})$ be a structured topological space.
		Suppose $End(\mathcal{H})$ contains the identity and is closed under composition.
		Suppose $End(\mathcal{H})$ is closed in $Fun(\mathcal{H}).$
		Moreover suppose that for every $v \in \mathcal{H}^n,$ the set $\{ X^{\oplus n}v | X \in End(\mathcal{H})\}\in \mathscr{S}.$
		Let $Y \in Fun(\mathcal{H}).$ If every closed $S\in\mathscr{S}$ which is jointly invariant for $End(\mathcal{H})$ is jointly invariant for $Y,$
		then $Y \in End(\mathcal{H}).$
	\end{corollary}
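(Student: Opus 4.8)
The plan is to re-run the argument of Theorem~\ref{equidyn} with $\mathscr{B} = End(\mathcal{H})$; the key observation is that the only jointly invariant set that proof ever uses is the orbit set $\{X^{\oplus n}v \mid X \in \mathscr{B}\}$, which here is \emph{structured} by hypothesis, so the weaker assumption of the corollary --- preservation only of closed \emph{structured} jointly invariant sets --- is exactly what is needed. First I would reduce to showing that $Y$ lies in the pointwise closure of $End(\mathcal{H})$: this suffices because $End(\mathcal{H})$ is closed in $Fun(\mathcal{H})$, and no separate continuity check is needed since $Y \in Fun(\mathcal{H})$ is given.

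Then I would fix a basic pointwise neighborhood of $Y$: pick $v_1,\dots,v_n \in \mathcal{H}$ and neighborhoods $U_i$ of $Yv_i$, set $v = \bigoplus_{i=1}^n v_i$ and $S = \{X^{\oplus n}v \mid X \in End(\mathcal{H})\}$. By the orbit hypothesis $S \in \mathscr{S}$, hence $\overline{S} \in \mathscr{S}$ by axiom~(1) of a structure, and $\overline{S}$ is closed. Since $End(\mathcal{H})$ is closed under composition, $Z^{\oplus n}X^{\oplus n}v = (Z\circ X)^{\oplus n}v \in S$ for all $Z, X \in End(\mathcal{H})$, so $S$ --- and hence, by continuity of each $Z^{\oplus n}$, also $\overline{S}$ --- is jointly invariant for $End(\mathcal{H})$. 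Now the corollary's hypothesis applies to the closed structured set $\overline{S}$ and forces $Y^{\oplus n}\overline{S} \subseteq \overline{S}$. Since $\mathrm{id}_{\mathcal{H}} \in End(\mathcal{H})$, we have $v \in S \subseteq \overline{S}$, so $Y^{\oplus n}v = (Yv_1,\dots,Yv_n) \in \overline{S}$; therefore the neighborhood $U_1 \times \cdots \times U_n$ of $Y^{\oplus n}v$ meets $S$, i.e.\ there is $X \in End(\mathcal{H})$ with $Xv_i \in U_i$ for every $i$. As the neighborhood was arbitrary, $Y \in \overline{End(\mathcal{H})} = End(\mathcal{H})$.

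The step I expect to be the crux --- and the only place the argument departs from a literal appeal to Theorem~\ref{equidyn} --- is the implication ``$\overline{S}$ jointly invariant for $End(\mathcal{H})$ $\Rightarrow Y^{\oplus n}\overline{S} \subseteq \overline{S}$''. In the theorem this is free from $Y \in \mathscr{B}\DC$, which presupposes preservation of \emph{all} closed jointly invariant sets; the corollary only grants preservation of the structured ones, so one must notice that $\overline{S}$ is genuinely structured --- this is exactly where the hypothesis $\{X^{\oplus n}v \mid X \in End(\mathcal{H})\} \in \mathscr{S}$ is used, together with closure of $\mathscr{S}$ under $S \mapsto \overline{S}$. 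The remaining ingredients --- closedness of $End(\mathcal{H})$, stability of the orbit under composition, and the elementary density argument --- are routine.
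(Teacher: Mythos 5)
Your proof is correct and is the intended one: the paper gives no explicit proof of this corollary, and you rightly observe that the argument for Theorem~\ref{equidyn} goes through with the weaker hypothesis (preservation of structured closed invariant sets only) because the only closed jointly invariant set that proof ever invokes is the orbit $\overline{\{X^{\oplus n}v \mid X \in End(\mathcal{H})\}}$, which is structured by assumption and remains so after closure by axiom~(1). You are also somewhat more careful than the theorem's displayed proof in passing explicitly from $S$ to $\overline{S}$ and checking joint invariance of $\overline{S}$ via continuity of each $Z^{\oplus n}$, which is a small but genuine tightening of the argument.
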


	\subsection{Invariant structure preserving functions}
	
	Let $(\mathcal{H}, \mathscr{S})$ be a structured topological space. Let $(\mathcal{T}, \mathcal{H}, \Lambda),$ $(\mathcal{R}, \mathcal{H}, \Omega)$ be spaces over $\mathcal{H}.$
	We say $f: \mathcal{T} \rightarrow \mathcal{R}$ is \dfn{invariant structure preserving} if for every $n\in \mathbb{N},$ for any $X_1,\ldots, X_n \in \mathcal{T}$ and closed structured $S$ jointly invariant for $\oplus^n_{i=1} X_i$,
	we have that $S$ is jointly invariant for $\oplus^n_{i=1} f(X_i).$
	We denote the space of invariance preserving functions from $\mathcal{T}$ to $End(\mathcal{H})$ by $H(\mathcal{T}).$

	\begin{example}
	Let $\mathcal{H} = \ell^2(\mathbb{Z})$ be equipped with the vector space structure. Note that $End(\mathcal{H})=\BH.$
	Let $\mathcal{T}$ be the set of invertible elements in $\BH$ as a space over $\mathcal{H}.$
	The function $h(X) = X^2$ is an invariant structure preserving function. However, the function $f(X)=X^{-1},$ fails to be invariant structure preserving as $\ell^2(\mathbb{N})$ is invariant for the bilateral shift but not its inverse.
	\end{example}

	We say that $\mathscr{Z}$ is a \dfn{compatiable
	structure} on $H(\mathcal{T})$ if:
	\begin{enumerate}
		\item $(H(\mathcal{T}), \mathscr{Z})$ is structured topological space,
		\item Every  $Z\in\mathscr{Z}$ is closed under pointwise composition,	
		\item for every $Z\in \mathcal{Z}$ such that $Z \subseteq H(\mathcal{T})^n,$ we have that $\textrm{id}_{\mathcal{H}^n} \in Z,$ 
		\item for any $Z \in \mathscr{Z},$ $X_1, \ldots, X_n \in \mathcal{T},$ and $v_1,\dots, v_n \in \mathcal{H},$ the set $\{\bigoplus^n_{i=1} f_i(X_{i})v_{i} | \oplus^n_{i=1} f_i \in Z\}\in\mathscr{S}.$
	\end{enumerate}
	A \dfn{structure basis} is a structured set of functions $\mathscr{B}$ which contains the maps $\lambda.$
	If the compatiable structure $\mathscr{Z}$ is regular, then there is a unique smallest structure basis. We denote the unique minimal structure basis by $P(\mathcal{T}).$
	In the case where the structure $\mathscr{Z}$ is not regular, we abuse notation and say that $P(\mathcal{T})$ is some structure basis.
	\begin{example}
		Let $(\mathcal{T}, \mathcal{H}, \Lambda)$ a space over $(\mathcal{H},\mathscr{S})$ a topological vector space with the vector space structure. 
		A compatiable structure arises naturally from the vector space structure on $End(\mathcal{H}).$
		A structure basis is given by polynomials in $\lambda.$
	\end{example}
	\begin{example}
		Let $(\mathcal{T}, \mathcal{H}, \Lambda)$ a space over $(\mathcal{H},\mathscr{S})$ a topological space with the trivial structure. 
		The set $End(\mathcal{H})$ has one element and so the structure basis would contain only a constant function equal to the identity map.
		Unless $\mathcal{H}$ is a singleton, the set $\{\textrm{id}_{\mathcal{H}}v\}$ is not structured. Therefore if $|\mathcal{H}|>1$ no structure basis exists.
	\end{example}
	\begin{example}
		Let $(\mathcal{T}, \mathcal{H}, \Lambda)$ a space over $(\mathcal{H},\mathscr{S})$ a topological space with the discrete structure. 
		A compatiable structure is given by the discrete structure.
		A structure basis is given by words in the $\lambda.$
	\end{example}
	\begin{example}
		Let $(\{\textrm{id}_{S_3}\}, \mathcal{H}, \Lambda)$ a space over $(S_3,\mathscr{S})$ the symmetric group $S_3$ with the group structure. 
		We certainly need the identity map and the homomorphism $O$ taking everything to the identity element $e.$
		Let $g$ be an element of order $3.$
		Therefore, we need an automorphism $X$ taking $g$ to $g^{-1},$ so that $\{ \textrm{id}_{S_3}g, e, Xg\}$ would be structured.
		Note that such an automorphism must be of the form $x \mapsto pxp$ for some $p$ an element of order $2.$
		Note that such an automorphism does not fix the subgroup $\{gp, e\}.$
		Therefore, no structure basis exists.
	\end{example}
	\begin{example} \label{coneexample}
		Let $(\mathcal{H},\mathscr{S})$ be a topological vector space over $\mathbb{R}$
		equipped with a structure $\mathscr{S}$ consisting of convex cones.
		A structure basis for a space $(\mathcal{T}, \mathcal{H}, \Lambda)$ over $\mathcal{H}$
		would be the set of polynomials in the $\lambda$ with non-negative coefficients.
	\end{example}
	\begin{example} \label{convexexample}
		Let $(\mathcal{H},\mathscr{S})$ be a topological vector space over $\mathbb{R}$
		equipped with a structure $\mathscr{S}$ consisting of convex sets.
		A structure basis for a space $(\mathcal{T}, \mathcal{H}, \Lambda)$ over $\mathcal{H}$
		would be the set of polynomials in the $\lambda$ with non-negative coefficients summing to $1.$
	\end{example}
	Note that if the structure is finer, generally less maps are needed in the structure basis, given that we are dealing with endomorphisms
	which are the same as functions. For example, we needed less maps for the discrete structure than for a vector space with the cone structure than
	for a vector space with the vector space structure.

	\subsection{Approximation from a structure basis}

	A similar argument to that for equidynamical approximation gives that a structure basis approximates any invariant structure preserving function.
	\begin{theorem}\label{equifun}
		Let $(\mathcal{T}, \mathcal{H}, \Lambda)$ a space over a structured topological space $(\mathcal{H},\mathscr{S})$ such that the set of functions on $\mathcal{T}$
		are endowed with a compatiable structure such that a structure basis exists.
		Then,
			$$\overline{P(\mathcal{T})} = H(\mathcal{T}).$$
	\end{theorem}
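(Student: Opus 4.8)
The plan is to prove the two inclusions separately: $\overline{P(\mathcal T)}\subseteq H(\mathcal T)$ is soft, while $H(\mathcal T)\subseteq\overline{P(\mathcal T)}$ is a transcription of the proof of the equidynamical approximation theorem (Theorem~\ref{equidyn}), with the composition-closed set $\mathscr B$ there replaced by the structure basis $P(\mathcal T)$ and operators replaced by invariant structure preserving functions. For the first inclusion, $P(\mathcal T)\subseteq H(\mathcal T)$ holds by the definition of a structure basis, so it is enough to see that $H(\mathcal T)$ is closed in the space of $End(\mathcal H)$-valued functions with the topology of pointwise convergence. If $f_\alpha\to f$ pointwise with each $f_\alpha\in H(\mathcal T)$ and $S$ is a closed structured set jointly invariant for $\bigoplus_{i=1}^{n}X_i$, then for each $(s_1,\dots,s_n)\in S$ the net $(f_\alpha(X_1)s_1,\dots,f_\alpha(X_n)s_n)$ lies in $S$ and converges coordinatewise to $(f(X_1)s_1,\dots,f(X_n)s_n)$, which hence lies in $S$; so $S$ is jointly invariant for $\bigoplus_{i=1}^{n}f(X_i)$ and $f\in H(\mathcal T)$.

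For the reverse inclusion, fix $Y\in H(\mathcal T)$ and a basic neighborhood of it. Since the topology on $H(\mathcal T)$ is that of pointwise convergence into $End(\mathcal H)$, which itself carries the topology of pointwise convergence, such a neighborhood has the form $B=\{g\in H(\mathcal T): g(X_i)v_i\in U_i,\ i=1,\dots,n\}$ for some $X_1,\dots,X_n\in\mathcal T$ (with repetitions allowed, so that several vectors may be tested at one point), vectors $v_1,\dots,v_n\in\mathcal H$, and open sets $U_i\ni Y(X_i)v_i$; the goal is to find an element of $B\cap P(\mathcal T)$. In the spirit of Theorem~\ref{equidyn}, set
\[
 S \;=\; \Bigl\{\, \bigoplus_{i=1}^{n} f(X_i)v_i \;:\; f\in P(\mathcal T)\,\Bigr\}\;\subseteq\;\mathcal H^{n}.
\]

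Three facts are needed. (i) $S\in\mathscr S$: this follows from compatibility axiom~(4) applied to the level-$n$ diagonal $\{(f,\dots,f):f\in P(\mathcal T)\}$ of $P(\mathcal T)$, which is a structured subset of $H(\mathcal T)^{n}$ because $P(\mathcal T)$ is a structure basis; hence also $\overline S\in\mathscr S$ by the closure axiom for $\mathscr S$. (ii) $S$, and therefore $\overline S$, is jointly invariant for $\bigoplus_{i=1}^{n}X_i$: for $\lambda\in\Lambda$ and $f\in P(\mathcal T)$ one has $\bigl(\bigoplus_i\lambda(X_i)\bigr)\bigl(\bigoplus_i f(X_i)v_i\bigr)=\bigoplus_i(\lambda\cdot f)(X_i)v_i$, with $\lambda\cdot f$ the pointwise composition, which lies in $S$ since $P(\mathcal T)$ contains each $\lambda$ and is closed under pointwise composition by axiom~(2), and the passage to $\overline S$ uses continuity of the endomorphisms $\lambda(X_i)$. (iii) $\bigoplus_{i=1}^{n}v_i\in S$, because by axiom~(3) the constant function $X\mapsto\mathrm{id}_{\mathcal H}$ lies in $P(\mathcal T)$. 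Now $\overline S$ is a closed structured set jointly invariant for $\bigoplus_{i=1}^{n}X_i$, so, $Y$ being invariant structure preserving, $\overline S$ is jointly invariant for $\bigoplus_{i=1}^{n}Y(X_i)$; together with $\bigoplus_i v_i\in S\subseteq\overline S$ this gives $\bigoplus_i Y(X_i)v_i=\bigl(\bigoplus_i Y(X_i)\bigr)\bigl(\bigoplus_i v_i\bigr)\in\overline S$. Since $U_1\times\cdots\times U_n$ is an open neighborhood of this point and $\overline S$ is the closure of $S$, there is $f\in P(\mathcal T)$ with $\bigoplus_i f(X_i)v_i\in U_1\times\cdots\times U_n$, i.e. $f(X_i)v_i\in U_i$ for all $i$, so $f\in B\cap P(\mathcal T)$ and $Y\in\overline{P(\mathcal T)}$.

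The genuine obstacle — beyond bookkeeping — is step (i): showing that $S$ is structured. This is exactly what the compatibility axioms, and axiom~(4) in particular, are designed to ensure, and verifying it requires care about how the levels of the structure $\mathscr Z$ on $H(\mathcal T)$, the levels of $\mathscr S$, and the amplification conventions interlock. Everything else is a faithful imitation of the equidynamical approximation argument.
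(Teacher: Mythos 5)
Your proof is correct and follows the same route as the paper's: for the inclusion $H(\mathcal T)\subseteq\overline{P(\mathcal T)}$ you build exactly the same set $S=\{\oplus_i f(X_i)v_i : f\in P(\mathcal T)\}$, observe that it is structured and jointly invariant for $\oplus_i X_i$, and apply the invariant-structure-preserving hypothesis to conclude $\oplus_i Y(X_i)v_i\in\overline S$ and thus hit the given basic neighborhood. The only differences from the paper's proof are that you spell out the verifications the paper compresses into ``Note $S$ is structured and jointly invariant'' (invoking compatibility axioms (2)--(4) explicitly and noting that $\oplus_i v_i\in S$ via the constant identity function), and that you also record the easy reverse inclusion $\overline{P(\mathcal T)}\subseteq H(\mathcal T)$, which the paper leaves implicit; both are faithful unpackings rather than a genuinely different argument.
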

	\begin{proof}
		Let $g \in H(\mathcal{T}).$
		Pick a basic neighborhood of $f.$ That is, 
		fix $X_1,\ldots, X_n,$ $v_1, \ldots v_n$ and and neighborhoods $U_1,\ldots U_n$ of $g(X_n)v_1, \ldots, g(X_n)v_n$ and consider 
			$B= \{Z \in End(\mathcal{H})^n| Z_iv_i \in U_i\}.$
		Let $$S = \{ \oplus_{i=1} f(X_i)v_i | f \in P(\mathcal{T})\}.$$
		Note $S$ is structured and jointly invariant for $\oplus^n_{i=1} X_i.$
		Therefore, $\oplus^n_{i=1} g(X_i)v_i \in \overline{S}.$
		So there is an $f \in P(\mathcal{T})$ such that $\oplus^n_{i=1} f(X_i)v_i \in U_1 \times \ldots \times U_n.$
		That is, $f(X_i)v_i \in U_i.$ Thus, $B \cap \overline{P(\mathcal{T})} \neq \emptyset.$
	\end{proof}
	
	\begin{example}\label{exampleschoenberg}
		Endow $\mathbb{R}$ with the convex cone structure.
		Let $F \subseteq End(\mathbb{R}) = \mathbb{R}.$
		Suppose $|\inf F| \leq \sup F$ and $\sup F, \inf F \notin F.$
		Note that the structure basis $P(F)$ consists of positive combinations of monomials $x^n$ by Example \ref{coneexample}.
		Let $f:F\rightarrow \mathbb{R}$ be an invariant structure preserving function.
		So, $f$ is a pointwise limit of functions of the form $\sum^N_{i=0} \gamma_i x^i$ where $\gamma_i \geq 0$ by Theorem \ref{equifun}.
		Note, for functions of the form $g(x) = \sum^N_{i=0} \gamma_i x^i,$ $|g(z)| \leq f(x)$
		for all $|z|<x.$  Therefore, $f$ analytically continues to a disk of radius $\sup F$ with nonnegative power series coefficients.
	\end{example}
	
	\begin{example}\label{exampleschoenberg2}
		Endow $\mathbb{R}$ with the convex set structure.
		Let $F \subseteq \mathbb{R}.$ Identify $F$ with the subset of linear maps in $End(\mathbb{R}).$
		Suppose $|\inf F| \leq \sup F$ and $\sup F, \inf F \notin F,$ $\sup f>1.$
		Note that the structure basis $P(F)$ consists of convex combinations of monomials $x^n$ by Example \ref{convexexample}.
		Let $f:F\rightarrow \mathbb{R}$ be an invariant structure preserving function.
		So, $f$ is a pointwise limit of functions of the form $\sum^N_{i=0} \gamma_i x^i$
		where $\gamma_i \geq 0$ and $\sum^N_{i=0} \gamma_{i}=1$ by Theorem \ref{equifun}.
		Note, for functions of the form $g(x) = \sum^N_{i=0} \gamma_i x^i,$ $|g(z)| \leq f(x)$
		for all $|z|<x.$
		Therefore, $f$ analytically continues to a disk of radius $\sup F$ with nonnegative power series coefficients which sum to $1$.
		(One can easily extend this to the $ax+b$ group, which is the full endomorphism group of $\mathbb{R}$ with the convex set structure. Note
		that if $\sup F\leq 1,$ we can obtain any function with positive power series coefficients that sum to less than or equal to $1.$)
	\end{example}

	\subsection{$N$-preservers}
	
	Let $(\mathcal{H}, \mathscr{S})$ be a structured topological space. Let $(\mathcal{T}, \mathcal{H}, \Lambda),$ $(\mathcal{R}, \mathcal{H}, \Omega)$ be spaces over $\mathcal{H}.$
	We say $f: \mathcal{T} \rightarrow \mathcal{R}$ is \dfn{$N$-invariant structure preserving} if for any $X_1,\ldots, X_N \in \mathcal{T}$ and closed $S$ jointly invariant for $\oplus^N_{i=1} X_i$
	we have that $S$ is jointly invariant for $\oplus^N_{i=1} f(X_i).$ (In particular, $N$ is fixed.)
	We denote the space of invariance preserving functions from $\mathcal{T}$ to $End(\mathcal{H})$ by $H_N(\mathcal{T}).$

	We say $P(\mathcal{T})$ has the \dfn{finite exact approximation property} if  for every $X_1, \ldots, X_n \in \mathcal{T}$ and $v_1, \ldots, v_n \in \mathcal{H}$
	the set $\{ \oplus^n_{i=1} p(X_i)v_i | p \in P(\mathcal{T})\}$ is closed.
	\begin{theorem}\label{nequifun}
		Let $(\mathcal{T}, \mathcal{H}, \Lambda)$ a space over a structured topological space $(\mathcal{H},\mathscr{S})$ such that the set of functions on $\mathcal{T}$
		are endowed with a compatiable structure such that a structure basis exists.
		A function $f \in H_N(\mathcal{T})$ if and only if for every $X_1, \ldots, X_N \in \mathcal{T}$ and $v_1, \ldots, v_N \in \mathcal{H}$
		$$\bigoplus^N_{i=1}f(X_i)v_i \in \overline{ \{ \oplus^N_{i=1} p(X_i)v_i | p \in P(\mathcal{T})\}}.$$
		
		Namely, if $P(\mathcal{T})$ has the finite exact approximation property, then $f \in H_N(\mathcal{T})$ if and only if
		for every $X_1, \ldots, X_N \in \mathcal{T}$ and $v_1, \ldots, v_N \in \mathcal{H}$ there exists a $p \in P(\mathcal{T})$
		such that $f(X_i)v_i=p(X_i)v_i.$
	\end{theorem}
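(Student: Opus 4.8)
The plan is to rerun, at the fixed level $N$ and for one choice of test vectors at a time, the orbit-closure argument that proves Theorem~\ref{equifun}, and then to read off the exact-fit statement from the finite exact approximation property. I would use only that a structure basis $P(\mathcal{T})$ contains the identity function and each $\lambda\in\Lambda$, is closed under pointwise composition, and, by the compatibility axioms, has the property that for all $X_1,\dots,X_n\in\mathcal{T}$ and $v_1,\dots,v_n\in\mathcal{H}$ the orbit set $\{\oplus^n_{i=1}p(X_i)v_i\mid p\in P(\mathcal{T})\}$ is structured; I also read the $S$ in the definition of $H_N(\mathcal{T})$ as ranging over closed \emph{structured} subsets of $\mathcal{H}^{N}$, matching the convention used for $H(\mathcal{T})$.

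For the implication ``$f\in H_N(\mathcal{T})\Rightarrow$ inclusion'' I would fix $X_1,\dots,X_N$ and $v_1,\dots,v_N$, set $S=\{\oplus^N_{i=1}p(X_i)v_i\mid p\in P(\mathcal{T})\}$, and note that $S$, hence $\overline S$, is structured, and that $S$ is jointly invariant for $\oplus^N_{i=1}X_i$ (i.e.\ invariant for each $\oplus^N_{i=1}\lambda(X_i)$, $\lambda\in\Lambda$): indeed $\bigl(\oplus^N_{i=1}\lambda(X_i)\bigr)\bigl(\oplus^N_{i=1}p(X_i)v_i\bigr)=\oplus^N_{i=1}q(X_i)v_i$ with $q\in P(\mathcal{T})$ the pointwise composite of $\lambda$ with $p$, and continuity of the $\lambda(X_i)$ carries joint invariance to $\overline S$. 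Then $f\in H_N(\mathcal{T})$ applied to the closed structured jointly invariant set $\overline S$ yields $\bigl(\oplus^N_{i=1}f(X_i)\bigr)\overline S\subseteq\overline S$; since $\mathrm{id}\in P(\mathcal{T})$ gives $\oplus^N_{i=1}v_i\in S\subseteq\overline S$, we get $\oplus^N_{i=1}f(X_i)v_i=\bigl(\oplus^N_{i=1}f(X_i)\bigr)\bigl(\oplus^N_{i=1}v_i\bigr)\in\overline S$, the asserted inclusion.

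For the converse I would assume the inclusion for every $N$-tuple of data and fix $X_1,\dots,X_N$, a closed structured $T\subseteq\mathcal{H}^{N}$ jointly invariant for $\oplus^N_{i=1}X_i$, and $w=\oplus^N_{i=1}w_i\in T$; with $R=\{\oplus^N_{i=1}p(X_i)w_i\mid p\in P(\mathcal{T})\}$ the goal is $\oplus^N_{i=1}f(X_i)w_i\in T$. By Theorem~\ref{equifun} one has $P(\mathcal{T})\subseteq\overline{P(\mathcal{T})}=H(\mathcal{T})\subseteq H_N(\mathcal{T})$, so every $p\in P(\mathcal{T})$ preserves the closed structured jointly invariant set $T$; thus $R\subseteq T$, whence $\overline R\subseteq T$. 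The hypothesis, applied to $X_1,\dots,X_N$ and $w_1,\dots,w_N$, gives $\oplus^N_{i=1}f(X_i)w_i\in\overline R\subseteq T$. As $w$, the $X_i$, and $T$ were arbitrary, $f\in H_N(\mathcal{T})$.

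For the ``namely'' clause: when $P(\mathcal{T})$ has the finite exact approximation property the orbit set $S=\{\oplus^N_{i=1}p(X_i)v_i\mid p\in P(\mathcal{T})\}$ is already closed, so the inclusion $\oplus^N_{i=1}f(X_i)v_i\in\overline S=S$ says exactly that $f(X_i)v_i=p(X_i)v_i$ for all $i$ for some $p\in P(\mathcal{T})$; combined with the equivalence this is the stated characterization. I do not expect a serious obstacle: the argument is a transplant of the proof of Theorem~\ref{equifun} to a single level together with one appeal to that theorem. The one place that really uses the structure-basis/compatibility axioms rather than just formal manipulation is the assertion that the orbit sets $S$ and $R$ are legitimate test objects for the definition of $H_N(\mathcal{T})$, i.e.\ that they are structured; in the vector-space model this is the familiar fact that a subspace invariant under the operators $\lambda(X_i)$ is invariant under the algebra they generate.
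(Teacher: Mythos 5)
Your proof is correct and follows essentially the same approach as the paper: both directions use the orbit sets $S_v=\{\oplus^N_{i=1}p(X_i)v_i \mid p\in P(\mathcal{T})\}$ exactly as in the proof of Theorem~\ref{equifun}, specialized to a fixed level $N$. You supply more detail than the paper in two places where it is terse --- you justify the forward-direction step that $\oplus^N_{i=1}v_i\in S$ via $\mathrm{id}\in P(\mathcal{T})$, and, for the converse, you prove the paper's bare assertion that every closed structured jointly invariant set is a union of orbit sets by invoking $P(\mathcal{T})\subseteq\overline{P(\mathcal{T})}=H(\mathcal{T})\subseteq H_N(\mathcal{T})$ from Theorem~\ref{equifun} to get $S_w\subseteq T$ for $w\in T$; the paper leaves both points implicit.
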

	\begin{proof}
		Let $f \in H_N(\mathcal{T}).$
		Let $$S_v = \{ \oplus^N_{i=1} p(X_i)v_i | h \in p(\mathcal{T})\}.$$
		Note $S$ is structured and jointly invariant for $\oplus^N_{i=1} X_i.$
		Therefore, $\oplus^N_{i=1} f(X_i)v_i \in \overline{S}.$
		
		Note that any invariant structured set must be a union of sets of the form $S_v$ so therefore the converse also holds.
	\end{proof}
	
	\begin{example}\label{exampleschoenberg3}
		Endow $\mathbb{R}$ with the convex cone structure.
		Let $F \subseteq End(\mathbb{R}) = \mathbb{R}.$
		Recall that the structure basis $P(F)$ consists of positive combinations of monomials $x^n$ by Example \ref{coneexample}.
		Let $f:F\rightarrow \mathbb{R}$ be an $N$-invariant structure preserving function.
		So, for every $x_1, \ldots, x_N \in \mathbb{R}$ there exists a sequence of polynomials $p_k(x) = \sum^k_{m=0} \gamma_{m,k} x^m$ where $\gamma_{m,k}\geq 0$
		such that $f(x_i) = \lim_{k\rightarrow \infty} p_k(x_i).$
		
		If, in addition, $F \subseteq \mathbb{R}^{\geq 0},$ $x_1\leq \ldots \leq x_N,$
		we see that $f(x_i) = \sum^{\infty}_{m=0} \gamma_mx_i^m + \gamma_{\infty}\chi_{x_i=x_N}.$ That is, at any $N$ points $f$ agrees with an invariant structure
		preserving function plus an indicator function at the largest point. (In fact, this is an invariant structure preserving function on $\{x_1,\ldots x_n\}.$)
	\end{example}

	\subsection{Free functions}
	
	We let $Hom(\mathcal{H}^n,\mathcal{H}^m)$ denote the \dfn{homomorphisms from $\mathcal{H}^n$ to $\mathcal{H}^m,$}
	the set of continuous maps $X: \mathcal{H}^n \rightarrow \mathcal{H}^m$ such that
	$$S \in \mathscr{S} \cap P(\mathcal{H}^{k+l+n})
	\Rightarrow (\textrm{id}_{\mathcal{H}}^{\oplus k} \oplus X\oplus \textrm{id}_{\mathcal{H}}^{\oplus l})S \in \mathscr{S}.$$

	Let $(\mathcal{H}, \mathscr{S})$ be a structured Hausdorff topological space.
	Let $(\mathcal{T}, \mathcal{H}, \Lambda),$ $(\mathcal{R}, \mathcal{H}, \Omega)$ be spaces over $\mathcal{H}.$
	We say $f: \mathcal{T} \rightarrow \mathcal{R}$ is a \dfn{free function} if for any $X_1,\ldots, X_n, Y_1, \ldots, Y_m \in \mathcal{T}$  
	and $\Gamma \in Hom(\mathcal{H}^n,\mathcal{H}^m)$ such that if $\Gamma \oplus^n_{i=1} \lambda(X_i) = \oplus^m_{i=1}\lambda(Y_i) \Gamma$
	for all $\lambda \in \Lambda,$ then $\Gamma \oplus^n_{i=1} \omega(f(X_i)) = \oplus^m_{i=1}\omega(f(Y_i)) \Gamma$ for all $\omega \in \Omega.$
	We denote the space of free functions from $\mathcal{T}$ to $End(\mathcal{H})$ by $F(\mathcal{T}).$
	\begin{example}
	Let $\mathcal{T}$ be the set of invertible elements in $\BH.$ The function $h(X) = X^2$ is a invariant structure preserving and free function. However, the function $f(X)=X^{-1},$ while a free function, fails to be invariant structure preserving.
	\end{example}
	
	We note that given a space $\mathcal{H}$ with different structures which have the same set of homomorphisms, we obtain the same set of of free functions, but may
	obtain different sets of invariant structure preserving functions. (For example, in the case of the cone structure and the vector space structure.)
	
	\begin{proposition}
	Let $(\mathcal{H}, \mathscr{S})$ be a structured Hausdorff topological space.
	Let $(\mathcal{T}, \mathcal{H}, \Lambda),$ $(\mathcal{R}, \mathcal{H}, \Omega)$ be spaces over $\mathcal{H}.$
	Any invariant structure preserving function $f: \mathcal{T} \rightarrow \mathcal{R}$ is free.
	
	Namely,
		$$H(\mathcal{T}) \subseteq F(\mathcal{T})$$
	\end{proposition}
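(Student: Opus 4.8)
The plan is to encode the intertwining relation that defines freeness as an invariance-of-graph statement, so that the hypothesis that $f$ is invariant structure preserving applies essentially verbatim. Fix $X_1,\dots,X_n,Y_1,\dots,Y_m\in\mathcal T$ together with $\Gamma\in Hom(\mathcal H^n,\mathcal H^m)$ satisfying $\Gamma\,\oplus^n_{i=1}\lambda(X_i)=\oplus^m_{i=1}\lambda(Y_i)\,\Gamma$ for every $\lambda\in\Lambda$. Form the $(n+m)$-tuple $Z=(X_1,\dots,X_n,Y_1,\dots,Y_m)\in\mathcal T^{n+m}$ and the set
$$S=\textrm{Graph }\Gamma=\{(v,\Gamma v)\mid v\in\mathcal H^n\}\subseteq\mathcal H^{n}\times\mathcal H^{m}\cong\mathcal H^{n+m}.$$
The elementary fact I would use twice is the standard equivalence: for maps $A\colon V\to V$, $B\colon W\to W$ and $\Gamma\colon V\to W$, the set $\textrm{Graph }\Gamma$ is invariant under $A\oplus B$ (acting by $(v,w)\mapsto(Av,Bw)$) if and only if $B\Gamma=\Gamma A$. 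Reading $\oplus^{n+m}_{i=1}\lambda(Z_i)$ as $\bigl(\oplus^n_{i=1}\lambda(X_i)\bigr)\oplus\bigl(\oplus^m_{i=1}\lambda(Y_i)\bigr)$, this equivalence turns the intertwining hypothesis into precisely the statement that $S$ is jointly invariant for $\oplus^{n+m}_{i=1}Z_i$.

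Next I would check that $S$ is a closed structured set, which is what makes it an admissible input to the definition of invariant structure preservation. Closedness follows from the Hausdorff assumption alone: $\mathcal H^m$ is Hausdorff, being a finite power of the Hausdorff space $\mathcal H$, and $\Gamma$ is continuous, so its graph is closed in $\mathcal H^n\times\mathcal H^m$. For structuredness, I would start from the block diagonal structure $D=\{(v_1,\dots,v_n,v_1,\dots,v_n)\mid v_j\in\mathcal H\}\in\mathscr S$ associated to the index sequence $1,\dots,n,1,\dots,n$, and apply the defining property of $\Gamma\in Hom(\mathcal H^n,\mathcal H^m)$ with $k=n$, $l=0$: since $D\in\mathscr S\cap P(\mathcal H^{2n})$, we get $(\textrm{id}_{\mathcal H}^{\oplus n}\oplus\Gamma)D\in\mathscr S$, and $(\textrm{id}_{\mathcal H}^{\oplus n}\oplus\Gamma)D$ is exactly $S$.

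Finally, invariant structure preservation of $f$, applied to the tuple $Z$ and to the closed structured jointly invariant set $S$, yields that $S$ is jointly invariant for $\oplus^{n+m}_{i=1}f(Z_i)=\bigl(\oplus^n_{i=1}f(X_i)\bigr)\oplus\bigl(\oplus^m_{i=1}f(Y_i)\bigr)$, i.e. that $\oplus^{n+m}_{i=1}\omega(f(Z_i))$ leaves $S$ invariant for each $\omega\in\Omega$. Running the graph--invariance equivalence in the reverse direction once more gives $\Gamma\,\oplus^n_{i=1}\omega(f(X_i))=\oplus^m_{i=1}\omega(f(Y_i))\,\Gamma$ for all $\omega\in\Omega$, which is the freeness condition; hence $H(\mathcal T)\subseteq F(\mathcal T)$. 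The one step that is more than bookkeeping is the verification that $S\in\mathscr S$: this is exactly where the hypothesis $\Gamma\in Hom(\mathcal H^n,\mathcal H^m)$, as opposed to mere continuity, is essential, and the Hausdorff assumption on $\mathcal H$ enters only once, to guarantee that the graph of $\Gamma$ is closed.
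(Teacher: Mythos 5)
Your proof is correct and follows essentially the same route as the paper: realize the intertwining condition as joint invariance of $\mathrm{Graph}\,\Gamma$ under $\oplus^n_{i=1}\lambda(X_i)\oplus\oplus^m_{i=1}\lambda(Y_i)$, then apply the invariant-structure-preserving hypothesis and read the conclusion back as the intertwining $\Gamma\,\oplus^n_{i=1}\omega(f(X_i))=\oplus^m_{i=1}\omega(f(Y_i))\,\Gamma$. You are in fact slightly more careful than the paper's own proof, which verifies that $\mathrm{Graph}\,\Gamma$ is closed and jointly invariant but never explicitly checks that it lies in $\mathscr{S}$ (as the definition of invariant structure preservation requires); your observation that $\mathrm{Graph}\,\Gamma=(\mathrm{id}_{\mathcal H}^{\oplus n}\oplus\Gamma)D\in\mathscr{S}$ for the block diagonal $D$, via the defining property of $\Gamma\in Hom(\mathcal H^n,\mathcal H^m)$, supplies the missing detail.
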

	\begin{proof}
		Suppose $f$ is invariant structure preserving.
		Suppose $\Gamma \oplus^n_{i=1} \lambda(X_i) = \oplus^m_{i=1}\lambda(Y_i) \Gamma.$
		Write $S = \{ (v, \Gamma v) | v\in \mathcal{H}^n\}.$
		Note that $$\oplus^n_{i=1} \lambda(X_i) \oplus \oplus^m_{i=1}\lambda(Y_i) (v, \Gamma v)=
		(\oplus^n_{i=1} \lambda(X_i)v, \Gamma \oplus^n_{i=1} \lambda(X_i) v).$$
		Therefore, $S$ is jointly invariant for $\oplus^n_{i=1} \lambda(X_i) \oplus \oplus^m_{i=1}\lambda(Y_i).$
		Moreover, $S$ is closed because $\mathcal{H}$ is Hausdorff.
		So, because $f$ is invariance preserving,
			$$\oplus^n_{i=1} \omega(f(X_i)) \oplus \oplus^m_{i=1}\omega(f(Y_i)) (v, \Gamma v)=
			(\oplus^n_{i=1} \omega(f(X_i))v, \Gamma \oplus^n_{i=1} \omega(f(X_i)) v).$$
		So we are done.
	\end{proof}
	
	We say Let $(\mathcal{H}, \mathscr{S})$ be a structured Hausdorff topological space.
	Let $(\mathcal{T}, \mathcal{H}, \Lambda)$ be a space over $\mathcal{H}.$
	We say $\mathcal{T}$ is \dfn{full} if for any $X_1, \ldots, X_n\in \mathcal{T},$ and closed joint invariant $S$ for $\oplus^n_{i=1} X_i$ 
	there exists a $Y_1, \ldots, Y_m \in \mathcal{T}$ and $\Gamma \in Hom(\mathcal{H}^m, \mathcal{H}^n)$ such that the range of $\Gamma$
	is dense in $S$ and $\oplus^n_{i=1} \lambda(X_i)\Gamma = \Gamma \oplus^m_{i=1} \lambda(Y_i).$
	Note that on a full set, free functions are invariant structure preserving.
	\begin{proposition}
		Let $(\mathcal{H}, \mathscr{S})$ be a structured Hausdorff topological space.
	Let $(\mathcal{T}, \mathcal{H}, \Lambda),$ $(\mathcal{R}, \mathcal{H}, \Omega)$ be spaces over $\mathcal{H}.$
	Suppose $\mathcal{T}$ is full. Any free function $f: \mathcal{T} \rightarrow \mathcal{R}$ is invariant structure preserving.
	
	Namely,
		$$H(\mathcal{T}) = F(\mathcal{T})$$
	\end{proposition}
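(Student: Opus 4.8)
The plan is to deduce invariant structure preservation from freeness by using fullness to model every closed structured invariant set as the closure of the range of an intertwiner, and then transporting the defining property of a free function along that intertwiner.

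First I would fix $X_1,\ldots,X_n \in \mathcal{T}$ and a closed structured set $S$ that is jointly invariant for $\oplus^n_{i=1}X_i$; the goal is to show that $\oplus^n_{i=1}\omega(f(X_i))$ maps $S$ into $S$ for every $\omega \in \Omega$. Applying the definition of fullness to $X_1,\ldots,X_n$ and $S$ yields $Y_1,\ldots,Y_m \in \mathcal{T}$ and $\Gamma \in Hom(\mathcal{H}^m,\mathcal{H}^n)$ whose range is dense in $S$ and which satisfies $\oplus^n_{i=1}\lambda(X_i)\,\Gamma = \Gamma\,\oplus^m_{i=1}\lambda(Y_i)$ for every $\lambda \in \Lambda$. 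Since $S$ is closed, this forces $\operatorname{ran}\Gamma \subseteq S$ and $\overline{\operatorname{ran}\Gamma} = S$.

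The main step is to feed this intertwining identity into the definition of a free function. Reading the identity as $\Gamma\,\oplus^m_{i=1}\lambda(Y_i) = \oplus^n_{i=1}\lambda(X_i)\,\Gamma$ with $\Gamma \in Hom(\mathcal{H}^m,\mathcal{H}^n)$, this is exactly the hypothesis of the free-function condition with the tuple $(Y_1,\ldots,Y_m)$ in the source slot and $(X_1,\ldots,X_n)$ in the target slot. Hence freeness of $f$ gives $\Gamma\,\oplus^m_{i=1}\omega(f(Y_i)) = \oplus^n_{i=1}\omega(f(X_i))\,\Gamma$ for every $\omega \in \Omega$, and therefore
\[
\oplus^n_{i=1}\omega(f(X_i))\,(\operatorname{ran}\Gamma) = \Gamma\,\bigl(\oplus^m_{i=1}\omega(f(Y_i))\bigr)(\mathcal{H}^m) \subseteq \operatorname{ran}\Gamma \subseteq S.
\]

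Finally, each $f(X_i)$ lies in $\mathcal{R}$, so $\omega(f(X_i)) \in End(\mathcal{H})$ is continuous, hence $\oplus^n_{i=1}\omega(f(X_i))$ is continuous; since $S$ is closed and $\overline{\operatorname{ran}\Gamma}=S$, we get $\oplus^n_{i=1}\omega(f(X_i))\,S \subseteq \overline{\oplus^n_{i=1}\omega(f(X_i))\,\operatorname{ran}\Gamma} \subseteq \overline{S}=S$. As $\omega$ ranges over $\Omega$ this says precisely that $S$ is jointly invariant for $\oplus^n_{i=1}f(X_i)$, so $f$ is invariant structure preserving; combined with the preceding proposition, which gives $H(\mathcal{T}) \subseteq F(\mathcal{T})$, we conclude $H(\mathcal{T})=F(\mathcal{T})$. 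I expect the only genuinely error-prone point to be the relabelling in the middle step — matching the direction of the intertwiner produced by fullness, namely $\oplus X_i\,\Gamma = \Gamma\oplus Y_i$, to the direction demanded in the definition of a free function — but this is bookkeeping rather than a real obstacle, as is the observation that ``range dense in a closed set $S$'' means $\overline{\operatorname{ran}\Gamma}=S$.
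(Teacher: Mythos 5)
Your proof is correct, and since the paper explicitly leaves this proposition's proof to the reader, your argument fills in exactly the intended reasoning: use fullness to model the closed invariant structured set $S$ as the closure of $\operatorname{ran}\Gamma$ for an intertwiner $\Gamma$ with a tuple $(Y_1,\ldots,Y_m)$, transport the intertwining through the free-function condition (with the roles of the two tuples relabelled so the direction of $\Gamma$ matches), and then push $\operatorname{ran}\Gamma$ through $\oplus\omega(f(X_i))$ and use continuity of $\omega(f(X_i))\in End(\mathcal{H})$ plus closedness of $S$ to conclude $\oplus\omega(f(X_i))\,S\subseteq S$. The observations you flag as potential pitfalls — matching the direction of $\Gamma$, and that ``range dense in the closed set $S$'' means $\operatorname{ran}\Gamma\subseteq S$ with $\overline{\operatorname{ran}\Gamma}=S$ — are handled properly, and the final appeal to the preceding proposition for the reverse inclusion $H(\mathcal{T})\subseteq F(\mathcal{T})$ is the right way to get the stated equality.
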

	We leave the proof to the reader.
	
	Let $\Sigma$ be a directed set. We say a topological space $\mathcal{X}$ has type $\Sigma$ if for every $x\in \mathcal{X}$
	there exists a neighborhood basis of open sets $B^x_\sigma$ indexed by $\Sigma$ such that if $\sigma' \geq \sigma$ then $B_{\sigma'}\subseteq B_{\sigma}.$
	\begin{example}
		Any first countable space is type $\mathbb{N}.$
	\end{example}
	Let $\Theta$ be a directed set, $\mathcal{X}$ a topological space, we say $\|\cdot\|: \mathcal{X} \rightarrow \Theta$ is a \dfn{value}.
	We say a set $S$ in $\mathcal{X}$ is \dfn{unbounded} if $\|S\|$ is unbounded in $\Theta$.
	\begin{example}
		Any norm on a Banach space is a value, where $\Theta = \mathbb{R}^{\geq 0}.$
	\end{example}
	
	Let $(\mathcal{T}, \mathcal{H}, \Lambda),$ be a space over $\mathcal{H}.$
	Equip $\mathcal{T}$ with a topology which is finer than the strong topology which is type $\Sigma.$
	Let $\|\cdot\|$ be a value on $End(\mathcal{H}).$
	We say $\mathcal{T}$ is \dfn{dilatory} if for any net $(X_\sigma)_{\sigma \in \Sigma}$ converging to some $X$ such that $X_\sigma \in B^X_\sigma,$
	there are $\hat{X}_1,\ldots, \hat{X_j} \in End(X)$ and $\Gamma_\sigma, \Gamma_\sigma'$ homomorphisms such that
	\begin{enumerate}
		\item $(\oplus \lambda(\hat{X_j})) \Gamma_\sigma = \Gamma_\sigma   \lambda(X_\sigma),$
		\item $\Gamma_\sigma'\Gamma_\sigma = \textrm{id}_{\mathcal{H}},$
		\item $\|\Gamma_\sigma' \oplus Y_i \Gamma_\sigma\| \leq \max \|Y_i\|$ for all $Y_1, \ldots, Y_j \in End(\mathcal{H})$
	\end{enumerate}
	\begin{example}
		Let $\mathcal{T}$ be an open polynomially convex set of operators on an infinite dimensional Hilbert space, the $\mathcal{T}$ is dilatory.
	\end{example}
	The definition of dilatory is set up so that any function must be locally bounded.
	\begin{proposition}
		Let $(\mathcal{H}, \mathscr{S})$ be a structured Hausdorff topological space.
		Let $(\mathcal{T}, \mathcal{H}, \Lambda),$ be a space over $\mathcal{H}.$
		Suppose $\mathcal{T}$ is dilatory. Any free function $f: \mathcal{T} \rightarrow End(\mathcal{H})$ is locally bounded.
	\end{proposition}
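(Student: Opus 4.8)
The plan is to argue by contradiction. Suppose $f$ is not locally bounded; then there is a point $X\in\mathcal{T}$ such that $f$ is unbounded on every neighborhood of $X$. Since $\mathcal{T}$ has type $\Sigma$, we use the decreasing neighborhood basis $B^X_\sigma$, $\sigma\in\Sigma$, and conclude that $f(B^X_\sigma)$ is unbounded in $\Theta$ for every $\sigma$. Using this, choose for each $\sigma$ a point $X_\sigma\in B^X_\sigma$ so that the net $\big(\|f(X_\sigma)\|\big)_{\sigma\in\Sigma}$ is unbounded in $\Theta$; this is possible precisely because $f$ is unbounded on each $B^X_\sigma$ (when $\Sigma=\mathbb{N}$, as in all the examples, this is the routine diagonal choice of $X_n\in B^X_n$ with $\|f(X_n)\|$ not below the $n$-th term of a cofinal sequence of $\Theta$). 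Because the $B^X_\sigma$ shrink, $X_\sigma\to X$, so this is an admissible net for the \emph{dilatory} hypothesis.

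Now apply the dilatory hypothesis to the net $(X_\sigma)$: there are finitely many $\hat X_1,\dots,\hat X_j$ in $\mathcal{T}$ — one and the same tuple for all $\sigma$ — together with homomorphisms $\Gamma_\sigma\in Hom(\mathcal{H}^1,\mathcal{H}^j)$ and $\Gamma_\sigma'$ satisfying (1)--(3). Condition (1), namely $\big(\bigoplus_{k=1}^j\lambda(\hat X_k)\big)\Gamma_\sigma=\Gamma_\sigma\lambda(X_\sigma)$ for all $\lambda\in\Lambda$, is exactly the intertwining hypothesis in the definition of a free function, taken with the one-element input tuple $X_\sigma$, the output tuple $(\hat X_1,\dots,\hat X_j)$, and intertwiner $\Gamma_\sigma$. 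Since $f$ is free, and viewing $End(\mathcal{H})$ as a space over $\mathcal{H}$ with $\Omega=\{\mathrm{id}_{\mathcal{H}}\}$, we obtain $\big(\bigoplus_{k=1}^j f(\hat X_k)\big)\Gamma_\sigma=\Gamma_\sigma f(X_\sigma)$. Multiplying on the left by $\Gamma_\sigma'$ and using (2), $\Gamma_\sigma'\Gamma_\sigma=\mathrm{id}_{\mathcal{H}}$, gives $f(X_\sigma)=\Gamma_\sigma'\big(\bigoplus_{k=1}^j f(\hat X_k)\big)\Gamma_\sigma$. Finally, (3) applied with $Y_k=f(\hat X_k)\in End(\mathcal{H})$ yields $\|f(X_\sigma)\|\le\max_{1\le k\le j}\|f(\hat X_k)\|=:M$.

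The point is that $M$ depends only on the fixed finite tuple $\hat X_1,\dots,\hat X_j$ and not on $\sigma$, so $\|f(X_\sigma)\|\le M$ for every $\sigma$ — contradicting the unboundedness of the net $\big(\|f(X_\sigma)\|\big)_\sigma$. Hence no such $X$ exists and $f$ is locally bounded. I expect the only genuinely delicate point to be the first step: arranging a net, indexed by $\Sigma$, whose $\|f\|$-values are unbounded in the directed set $\Theta$ — immediate when $\Theta$ has countable cofinality (e.g.\ $\Theta=\mathbb{R}^{\ge 0}$) but requiring a little care for general directed sets — together with the bookkeeping of checking that the dilating tuple $\hat X_k$ produced by the dilatory condition lies in $\mathcal{T}$, so that $f(\hat X_k)$ and the transport of the intertwiner through $f$ are legitimate. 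Everything else is formal; the real content is simply the observation that ``dilatory'' hands us a single finite dilating tuple good for the whole net, whence the uniform bound $M$.
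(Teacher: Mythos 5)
The paper omits the proof, saying only that it is ``essentially the definition'' and leaving the details to the reader; your write-up supplies exactly those details correctly, and the route is the natural one. Applying freeness of $f$ to the intertwining relation $\Gamma_\sigma\lambda(X_\sigma)=\bigl(\bigoplus_k\lambda(\hat X_k)\bigr)\Gamma_\sigma$ to get $\Gamma_\sigma f(X_\sigma)=\bigl(\bigoplus_k f(\hat X_k)\bigr)\Gamma_\sigma$, then using $\Gamma_\sigma'\Gamma_\sigma=\mathrm{id}_{\mathcal{H}}$ and condition (3) to conclude $\|f(X_\sigma)\|\le\max_k\|f(\hat X_k)\|$, is the whole content, and the crucial observation — that the dilating tuple $\hat X_1,\dots,\hat X_j$ is quantified independently of $\sigma$, so the resulting bound is uniform along the net — is exactly the point the definition is engineered to deliver. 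You also read ``$\hat X_k\in End(X)$'' in the definition of \emph{dilatory} as meaning $\hat X_k\in\mathcal{T}$, which is the only reading under which $f(\hat X_k)$ makes sense; that appears to be a typo in the paper. The one step you flag — producing, for the contradiction, a net $(X_\sigma)_{\sigma\in\Sigma}$ with $X_\sigma\in B^X_\sigma$ whose $\|f\|$-values are unbounded in $\Theta$ — is indeed the only nontrivial point. It goes through routinely when $\Sigma$ and $\Theta$ both admit cofinal countable chains (the situations the paper actually uses, $\Sigma=\mathbb{N}$ and $\Theta=\mathbb{R}^{\ge0}$), but for arbitrary pairs $(\Sigma,\Theta)$ it does seem to need some compatibility hypothesis that the paper does not make explicit; this is not a defect of your argument so much as a looseness in the definitions that the paper glosses over.
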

	The proof is essentially the definition. We leave the details to the reader.
	For free functions on large enough domains, including polynomially convex domains, or more generally open ``free sets," locally bounded (in the norm topology) functions 
	must be analytic \cite{AGMCOP}.
	
	\begin{example}
		Let $\mathcal{H}$ be Banach space such that there is a unitary operator $\Gamma: \mathcal{H} \rightarrow \bigoplus^{\infty}_{i=1} \mathcal{H}.$
		Let $\mathcal{T} \subseteq End(\mathcal{H})^\Lambda$ for some index set $\Lambda.$
		Endow $\mathcal{T}$ with the norm topology.
		Suppose $\mathcal{T} = \bigcup^{\infty}_{n=1} \mathcal{T}_n$ such that:
		\begin{enumerate}
			\item for any sequence $(X_i)^{\infty}_{i=1}$ of points in $\mathcal{T}_n$ we have that $\Gamma^{-1}\oplus^\infty_{i=1}X_i\Gamma \in \mathcal{T}_n,$
			\item the closure of $\mathcal{T}_n$ is contained in the interior of $\mathcal{T}_{n+1}.$
		\end{enumerate}
		Then, $\mathcal{T}$ is dilatory.
		
		So, for example every free function on $\mathbf{B}(\mathcal{H})^d$ is locally bounded trivially. Similarly, for any open polynomially convex
		free noncommutative set, every function is locally bounded.
	\end{example}

	\section{Case study: $\overline{\mathrm{ball}}(\ell^p_A)$ as preservers of invariant $p$-balanced sets for $p\leq 1$}

	Let $\ell^p_A$ be the set of analytic functions on the disk such that the power series coefficients at $0$ are in $\ell^p.$
	In the case where $p=1$, we have that $\ell^p_A$ is the analytic Wiener algebra. 
	The spaces $\ell^p_A$ for $p \in [1,\infty)$ are of course exciting for a variety of reasons \cite{cheng}.
	
	We say $S\subseteq \mathbb{C}^n$ is \dfn{$p$-balanced} if for every sequence $(s_i)^{N}_{i=0}$ of elements in $S$ and sequence $c=(c_i)^{N}_{i=0}$
	such that $\|c\|_p\leq 1,$ we have that $\sum^{N}_{i=0} c_is_i \in S.$
	We define the \dfn{$p$-balanced structure} on $\mathbb{C}$ to be the stucture given by all
	$p$-balanced sets. For example, if $p=1,$ we get exactly the classically balanced sets.
	Note that the elements of $\textrm{End}(\mathcal{C})$ are naturally identified with elements of $\mathbb{C}.$
	
	Let $D\subset \mathbb{C}.$ The unique minimal structure basis $P(\mathbb{D})$ for $H(D)$ is given by functions 
	of the form $p(z) = \sum^{N}_{i=0} c_iz^i$ where $\|c\|_p\leq 1.$ (This is not true if $p>1.$)
	
	A $p$-balanced analytic function space $\mathcal{K}$ on $\mathbb{D}$ is a topological vector space of analytic functions such that
	\begin{enumerate}
	\item $\mathcal{K}$  contains all functions in $\ell^p_A$ as functions,
	\item all elements of $\ell^p_A$ are multipliers of $\mathcal{K}$,
	\item $\overline{\mathrm{ball}}(\ell^p_A)$ is closed in $\mathcal{K}.$
	\end{enumerate}
	
	\begin{theorem}
		Let $p\leq 1.$
		Let $f: \mathcal{D} \rightarrow \mathcal{C}.$
		The following are equivalent:
		\begin{enumerate}
			\item $f \in \overline{\mathrm{ball}}(\ell^p_A),$
			\item $f \in H(\mathbb{D}),$ where $\mathbb{D}$ is viewed as a subset of $\textrm{End}(\mathbb{C})$ where $\mathbb{C}$ is equipped with the
			$p$-balanced structure,
			\item For every $p$-balanced analytic function space $\mathcal{K}$ on $\mathbb{D}$ such that if $S$ is a closed $p$-balanced set
			such that $zS \subseteq S,$ then $fS\subseteq S.$
		\end{enumerate}
	\end{theorem}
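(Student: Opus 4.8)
The plan is to obtain $(1)\Leftrightarrow(2)$ from the general approximation theorem, Theorem~\ref{equifun}, and to prove $(1)\Leftrightarrow(3)$ by hand, exploiting that $\ell^p_A$ is a quasi-Banach algebra for $p\le 1$. For $(1)\Leftrightarrow(2)$, first note that by Theorem~\ref{equifun} we have $H(\mathbb{D})=\overline{P(\mathbb{D})}$, the closure being in the topology of pointwise convergence on $\mathbb{D}$, and that, as recalled just before the theorem, $P(\mathbb{D})$ is exactly the set of polynomials $p(z)=\sum_{i=0}^{N}c_iz^i$ with $\|c\|_p\le 1$. So the content of $(1)\Leftrightarrow(2)$ is the identity $\overline{P(\mathbb{D})}=\overline{\mathrm{ball}}(\ell^p_A)$ inside the space of complex functions on $\mathbb{D}$. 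One inclusion is immediate: given $f=\sum c_iz^i$ with $\|c\|_p\le 1$, each Taylor partial sum lies in $P(\mathbb{D})$, and since $p\le 1$ forces $\|c\|_1\le\|c\|_p\le 1$, these partial sums converge to $f$ absolutely and uniformly on $\mathbb{D}$, hence pointwise; thus $\overline{\mathrm{ball}}(\ell^p_A)\subseteq\overline{P(\mathbb{D})}$.

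For the reverse inclusion, suppose $f$ is a limit (along a net, since the product topology need not be metrizable), pointwise on $\mathbb{D}$, of polynomials $p_\alpha(z)=\sum_i c_i^{(\alpha)}z^i$ with $\|c^{(\alpha)}\|_p\le 1$. Again $\|c^{(\alpha)}\|_1\le 1$, so the $p_\alpha$ are uniformly bounded by $1$ on $\mathbb{D}$ and, by Montel's theorem, form a normal family; a subnet therefore converges uniformly on compact subsets to an analytic $h$, and $h=f$ since $p_\alpha\to f$ pointwise. In particular the Taylor coefficients converge coordinatewise, $c_i^{(\alpha)}\to c_i$ along that subnet, and Fatou's lemma for the counting measure gives $\sum_i|c_i|^p\le\liminf\sum_i|c_i^{(\alpha)}|^p\le 1$, i.e.\ $f\in\overline{\mathrm{ball}}(\ell^p_A)$. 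This completes $(1)\Leftrightarrow(2)$.

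Next, $(1)\Rightarrow(3)$. If $f=\sum c_iz^i$ with $\|c\|_p\le 1$ then $f\in\ell^p_A$, so $f$ multiplies any $p$-balanced analytic function space $\mathcal{K}$; let $S\subseteq\mathcal{K}$ be closed, $p$-balanced, and satisfy $zS\subseteq S$. For $g\in S$, iterating $zS\subseteq S$ gives $z^ig\in S$ for all $i$, and each partial sum $g\sum_{i=0}^{N}c_iz^i$ is an $\ell^p$-combination of the $z^ig$ with coefficient quasinorm $\le 1$, hence lies in $S$ by $p$-balancedness. The Taylor partial sums of $f$ converge to $f$ in the $\ell^p_A$ quasinorm, and the multiplier action into $\mathcal{K}$ depends continuously on the multiplier — here one invokes the closed graph theorem: since point evaluations on $\mathcal{K}$ are continuous and $\mathcal{K}$ is complete, the linear map $h\mapsto hg$ from $\ell^p_A$ to $\mathcal{K}$ has closed graph, hence is bounded — so $g\sum_{i=0}^{N}c_iz^i\to fg$ in $\mathcal{K}$ and $fg\in\overline{S}=S$. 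Thus $fS\subseteq S$.

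Finally, $(3)\Rightarrow(1)$: apply $(3)$ to $\mathcal{K}=\ell^p_A$ itself — which is a $p$-balanced analytic function space because for $p\le 1$ the space $\ell^p_A$ is a quasi-Banach algebra (so it contains $\ell^p_A$ as functions and is acted on by $\ell^p_A$ as multipliers) whose closed unit ball is closed in it — and to $S=\overline{\mathrm{ball}}(\ell^p_A)$, which is closed, $p$-balanced (from $\|\sum c_is_i\|_p^p\le\sum|c_i|^p\|s_i\|_p^p\le 1$) and invariant under multiplication by $z$ (which merely shifts the coefficient sequence). Then $(3)$ yields $fS\subseteq S$; since the constant function $1$ lies in $S$, we conclude $f=f\cdot 1\in S=\overline{\mathrm{ball}}(\ell^p_A)$, which is $(1)$. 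The one genuinely delicate point is the continuity of the multiplier action used in $(1)\Rightarrow(3)$: it is not literally part of the definition of a $p$-balanced analytic function space and must be extracted from the standard completeness/continuous-point-evaluation hypotheses on $\mathcal{K}$; the remaining steps are routine manipulations of the $p\le 1$ inequalities $\|c\|_1\le\|c\|_p$ and $\|a*b\|_p\le\|a\|_p\|b\|_p$.
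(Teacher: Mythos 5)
Your proof follows the same route as the paper's — invoke Theorem~\ref{equifun} for $(1)\Leftrightarrow(2)$, take the test set $S=\overline{\mathrm{ball}}(\ell^p_A)$ for $(3)\Rightarrow(1)$, and expand $fs=\sum c_iz^is$ via $p$-balancedness for $(1)\Rightarrow(3)$ — but you flesh out two steps the paper records in a single line each, and in one of them you have correctly located a real gap. For $(1)\Leftrightarrow(2)$, the paper only says it ``follows from Theorem~\ref{equifun},'' which yields $H(\mathbb{D})=\overline{P(\mathbb{D})}$ in the pointwise topology; the further identification $\overline{P(\mathbb{D})}=\overline{\mathrm{ball}}(\ell^p_A)$ is a separate fact that the paper does not argue, and your Montel/Fatou argument (together with $\|c\|_1\le\|c\|_p$ for the easy inclusion) supplies it correctly.

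For $(1)\Rightarrow(3)$, the paper's proof is the one-liner ``$f(z)s(z)=\sum_{i=0}^{\infty}c_iz^is(z)\in S$,'' which silently passes from finite to infinite combinations. As you observe, the definition of $p$-balanced only licenses finite sums, so one must form the partial sums $\sum_{i=0}^{N}c_iz^is\in S$, use closedness of $S$, and show these converge to $fs$ in the topology of $\mathcal{K}$. Your closed-graph patch accomplishes this, but it presupposes that $\mathcal{K}$ is complete (in some F-space or quasi-Banach sense) and has continuous point evaluations, and neither is among the paper's three stated axioms for a $p$-balanced analytic function space (``topological vector space of analytic functions containing $\ell^p_A$, on which $\ell^p_A$ acts as multipliers, with $\overline{\mathrm{ball}}(\ell^p_A)$ closed''). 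So the hole you flag at the end is genuine, but it sits in the paper's definition and proof of $(1)\Rightarrow(3)$ rather than in your argument; the rest of your write-up, including the verification that $S=\overline{\mathrm{ball}}(\ell^p_A)$ is $p$-balanced via $\|\sum c_is_i\|_p^p\le\sum|c_i|^p\|s_i\|_p^p$, is a correct and more careful rendering of what the paper intends.
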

	\begin{proof}
		$(1 \Leftrightarrow 2)$ follows from Theorem \ref{equifun}.
		
		$(3 \Rightarrow 1)$ Let $S= \overline{\mathrm{ball}}(\ell^p_A).$ Note $zS \subseteq S.$
		Therefore, $fS\subseteq S.$ Namely, $f \in \overline{\mathrm{ball}}(\ell^p_A).$
		
		$(1\Rightarrow 3)$ Note that $f(z) = \sum^{\infty}_{i=0} c_iz^i$ where $\|c\| \leq 1.$
		So, for any $s(z) \in S,$ $f(z)s(z) = \sum^{\infty}_{i=0} c_iz^is(z) \in S.$
	\end{proof}
	
	For example, a function is in the analytic Wiener algebra if and only if for each closed balanced convex set $S$ in the Hardy space such that $zS\subseteq S$
	we have that $fS\subseteq S.$

	\section{Case study: Weak algebraicity in noncommutative function theory}

	We fix $\mathcal{H}$ a Hilbert space.
	Let $\mathcal{M}, \mathcal{N}, \mathcal{M}', \mathcal{N}'$ be Hilbert spaces.
	Let $\mathcal{T} \subseteq \mathbf{B}(\mathcal{M} \otimes \mathcal{H}, \mathcal{N} \otimes \mathcal{H})$ viewed naturally as a space over $\mathcal{H}$
	We will consider free and invariant structure preserving functions of the form
	$f: \mathcal{T} \rightarrow \mathbf{B}(\mathcal{M}' \otimes \mathcal{H}, \mathcal{N}' \otimes \mathcal{H}).$ 
	An aim of operatorial free function theory is to understand functional calculus for several noncommuting operators and its applications. (See e. g. 
	\cite{AGMCOP,agmc2,mancuso,kennyd, kensham, jkmmp}.) We will see that invariant structure preserving functions are in the pointwise weak closure
	of the polynomials and therefore free.
	Moreover on polynomially convex sets, bounded functions can be approximated by bounded polynomials.
	The case when $\mathcal{M}$ and $\mathcal{N}$ are finite dimensional corresponds to working $M$ by $N$ matrices with operator entries.
	The invariance preserving condition then says that $S$ is a joint invariant subspace for the entries of the matrix.

	The space of invariant structure preserving functions on $\mathcal{T}$ taking values in
	$\mathbf{B}(\mathcal{M}' \otimes \mathcal{H}, \mathcal{N}' \otimes \mathcal{H})$,
	denoted $H(\mathcal{T},\mathcal{M}',\mathcal{N}'),$ is a vector space and closed in the topology of weak pointwise convergence.
	The space of free functions on $\mathcal{T}$ taking values in
	$\mathbf{B}(\mathcal{M}' \otimes \mathcal{H}, \mathcal{N}' \otimes \mathcal{H})$,
	denoted $F(\mathcal{T},\mathcal{M}',\mathcal{N}'),$ is a vector space and closed in the topology of weak pointwise convergence.
	
	Moreover, a free noncommutative polynomial naturally gives rise to a function which is both free and invariant structure preserving.
	We now describe what we mean by a free noncommutative polynomial.
	Define $E(\mathcal{T}, \mathcal{M}',\mathcal{N}')$ to be the set of functions of the form $(W\otimes I_{\mathcal{H}})X(V\otimes I_{\mathcal{H}})$ where $V, W$ are bounded operators.
	We define $P(\mathcal{T}, \mathcal{M}',\mathcal{N}')$ to be span of products of the form $E_NE_{N-1}\ldots E_1$
	where $E_i \in E(\mathcal{T},\mathcal{P}_{i-1},\mathcal{P}_{i})$ where $\mathcal{P}_i$ are Hilbert spaces and $\mathcal{P}_0=\mathcal{M}'$
	and $\mathcal{P}_N=\mathcal{N}'.$
	
The following direct corollary of Theorem \ref{equifun} shows that invariant structure preserving functions can be approximated by polynomials in the weak operator topology. Importantly, the value of a invariant structure preserving function at $X$
is contained in the weak operator topology closure of the unital algebra generated by the coordinates of $X.$
\begin{corollary}\label{mainresult}
	$$\overline{P(\mathcal{T}, \mathcal{M}',\mathcal{N}')}^{WOT} = H(\mathcal{T}, \mathcal{M}',\mathcal{N}').$$
\end{corollary}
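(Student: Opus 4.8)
The plan is to deduce Corollary \ref{mainresult} directly from Theorem \ref{equifun} by verifying that the noncommutative function-theoretic setup of this section is an instance of the abstract framework there. First I would fix the structured Hausdorff topological space to be $(\mathcal{H}, \mathscr{S})$ with $\mathscr{S}$ the vector space structure (closed subspaces of $\mathcal{H}^n$), so that $End(\mathcal{H}) = \mathbf{B}(\mathcal{H})$, and I would recall the space $(\mathcal{T}, \mathcal{H}, \Lambda)$ with $\Lambda$ the maps $\lambda_{\mu,\nu}(X) = (\nu \otimes I_{\mathcal{H}})X(\mu \otimes I_{\mathcal{H}})$ as in the relevant example. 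The target space $(\mathcal{R}, \mathcal{H}, \Omega)$ is $\mathbf{B}(\mathcal{M}' \otimes \mathcal{H}, \mathcal{N}' \otimes \mathcal{H})$ with $\Omega$ the analogous evaluation maps $\omega_{\mu',\nu'}$. With these identifications, $H(\mathcal{T})$ in the abstract sense becomes $H(\mathcal{T}, \mathcal{M}', \mathcal{N}')$, since a closed subspace $S \subseteq \mathcal{H}^{nm}$ being jointly invariant for $\bigoplus \lambda(X_i)$ is exactly the condition that $S$ is a joint invariant subspace for the operator entries, and likewise for $f(X_i)$ on the target side.

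The crux is to identify the minimal structure basis $P(\mathcal{T})$ of the abstract theorem with (the WOT-closure relevant version of) $P(\mathcal{T}, \mathcal{M}', \mathcal{N}')$. Here I would argue as follows: the set $P(\mathcal{T}, \mathcal{M}', \mathcal{N}')$, being the span of products $E_N E_{N-1} \cdots E_1$ with $E_i \in E(\mathcal{T}, \mathcal{P}_{i-1}, \mathcal{P}_i)$, is closed under pointwise composition and contains the identity map $\mathrm{id}_{\mathcal{H}^n}$ (and contains the coordinate-evaluation maps $\lambda$, taking $N=1$ with $V, W$ chosen to be the relevant rank-one-type operators). The compatible structure $\mathscr{Z}$ on $H(\mathcal{T})$ is the one induced by the vector space structure on the target $\mathbf{B}(\mathcal{M}' \otimes \mathcal{H}, \mathcal{N}' \otimes \mathcal{H})$: each $Z \in \mathscr{Z}$ is a closed linear subspace of $H(\mathcal{T})^k$, automatically closed under pointwise composition in the appropriate sense, and containing the identity. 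One then checks condition (4) of compatibility — that $\{\bigoplus_{i=1}^n f_i(X_i)v_i \mid \bigoplus f_i \in Z\}$ is a structured (i.e., closed-subspace) set — which holds because it is the closure of a linear span of vectors in $\mathcal{H}^n$. Hence $P(\mathcal{T}, \mathcal{M}', \mathcal{N}')$ is a structure basis, and by minimality of the abstract $P(\mathcal{T})$ we get that the abstract $P(\mathcal{T})$ is contained in $P(\mathcal{T},\mathcal{M}',\mathcal{N}')$; the reverse containment is automatic since every polynomial is invariant structure preserving and lies in the span generated by the $\lambda$'s. Taking closures, $\overline{P(\mathcal{T})} = \overline{P(\mathcal{T}, \mathcal{M}', \mathcal{N}')}^{WOT}$.

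Finally, I would invoke Theorem \ref{equifun} to conclude $\overline{P(\mathcal{T})} = H(\mathcal{T})$, and combine this with the identifications above to obtain $\overline{P(\mathcal{T}, \mathcal{M}', \mathcal{N}')}^{WOT} = H(\mathcal{T}, \mathcal{M}', \mathcal{N}')$. I would also note, as the corollary's statement emphasizes, that applying this with $n=1$ and a single operator $X$ shows the value $f(X)$ lies in the WOT-closure of the unital algebra generated by the entries of $X$, since all polynomials in $P(\mathcal{T}, \mathcal{M}', \mathcal{N}')$ take such values. The main obstacle I anticipate is the bookkeeping in step two: verifying cleanly that $P(\mathcal{T}, \mathcal{M}', \mathcal{N}')$ satisfies \emph{all four} axioms of a compatible structure / structure basis simultaneously — in particular that the pointwise-composition closure really does hold for products landing in intermediate Hilbert spaces $\mathcal{P}_i$, and that the weak-pointwise-convergence topology (rather than a norm topology) is the correct "strong topology" in the sense of the abstract framework, so that Theorem \ref{equifun}'s closure is genuinely the WOT-closure. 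Everything else is a routine translation of definitions.
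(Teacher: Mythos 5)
Your proposal is correct and matches the paper's intent: the paper offers no proof beyond calling this a ``direct corollary'' of Theorem \ref{equifun}, and your reduction is exactly the intended unwinding — take $\mathscr{S}$ to be the vector space structure on $\mathcal{H}$, identify $\Lambda$ with the compression maps $\lambda_{\mu,\nu}$, observe that $P(\mathcal{T},\mathcal{M}',\mathcal{N}')$ serves as a structure basis, and invoke the abstract approximation theorem. The only point you flag as uncertain — whether the abstract pointwise-convergence closure really gives the WOT-closure — resolves without needing to re-topologize $\mathcal{H}$: Theorem \ref{equifun} with the norm topology on $\mathcal{H}$ gives $\overline{P}^{SOT}=H$, and since $P \subseteq H$ with $H$ WOT-closed (as noted in the paper), one has $H = \overline{P}^{SOT} \subseteq \overline{P}^{WOT} \subseteq H$, forcing equality.
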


We immediately see the following corollary: invariant structure preserving functions are free functions. Moreover, they also preserve co-invariant subspaces.
\begin{corollary}
\begin{enumerate}
	\item $H(\mathcal{T},\mathcal{M}',\mathcal{N}') \subseteq F(\mathcal{T},\mathcal{M}',\mathcal{N}'),$
	\item If $f \in H(\mathcal{T},\mathcal{M}',\mathcal{N}'),$ then $(\mathcal{N}'^* \otimes S^*) f(\mathcal{T}) \subseteq (\mathcal{M}'^* \otimes S^*) $ whenever $S$ is a closed subspace of $\hilbert$ such that $(\mathcal{N}^* \otimes S^*) X \subseteq (\mathcal{M}^* \otimes S^*).$
\end{enumerate}
\end{corollary}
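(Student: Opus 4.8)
Both parts follow by the same ``approximate, then pass to the limit'' paradigm, using Corollary \ref{mainresult}, which identifies $H(\mathcal{T},\mathcal{M}',\mathcal{N}')$ with the WOT-pointwise closure of the polynomials $P(\mathcal{T},\mathcal{M}',\mathcal{N}')$: a property enjoyed by every polynomial and stable under WOT-pointwise limits is enjoyed by every $f\in H(\mathcal{T},\mathcal{M}',\mathcal{N}')$. For part (1) the general proposition $H(\mathcal{T})\subseteq F(\mathcal{T})$ proved earlier already applies verbatim, and for part (2) the polynomial computation can be bypassed by a short orthogonal-complement argument.

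\textbf{Part (1).} A Hilbert space $\mathcal{H}$ with its vector-space (closed-subspace) structure is a structured Hausdorff topological space, so the proposition stating that every invariant structure preserving function on such a space is free applies directly, giving $H(\mathcal{T},\mathcal{M}',\mathcal{N}')\subseteq F(\mathcal{T},\mathcal{M}',\mathcal{N}')$. Alternatively, in the spirit of this section: a free noncommutative polynomial gives a function which is free, $F(\mathcal{T},\mathcal{M}',\mathcal{N}')$ is closed in the topology of WOT-pointwise convergence, hence $\overline{P(\mathcal{T},\mathcal{M}',\mathcal{N}')}^{WOT}\subseteq F(\mathcal{T},\mathcal{M}',\mathcal{N}')$, and by Corollary \ref{mainresult} the left-hand side is exactly $H(\mathcal{T},\mathcal{M}',\mathcal{N}')$.

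\textbf{Part (2).} The plan is to translate the hypothesis on $S$ into invariance of $S^\perp$ and then invoke the definition of $H$. First, for a closed subspace $S\subseteq\mathcal{H}$, the condition $(\mathcal{N}^*\otimes S^*)X\subseteq(\mathcal{M}^*\otimes S^*)$ is, after the Riesz identification $S^*\cong S$, the same as $X^*(\mathcal{N}\otimes S)\subseteq\mathcal{M}\otimes S$; taking orthogonal complements inside $\mathcal{M}\otimes\mathcal{H}$ and $\mathcal{N}\otimes\mathcal{H}$ and using $(\mathcal{M}\otimes S)^\perp=\mathcal{M}\otimes S^\perp$, this is equivalent to $X(\mathcal{M}\otimes S^\perp)\subseteq\mathcal{N}\otimes S^\perp$. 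Unwinding the coordinate maps $\lambda_{\mu,\nu}(X)=(\nu\otimes I_{\mathcal{H}})X(\mu\otimes I_{\mathcal{H}})$ (by passing to $\nu$-slices), the last condition says precisely that $S^\perp$ is jointly invariant for $X$ in the sense of a space over $\mathcal{H}$, i.e. $\lambda_{\mu,\nu}(X)S^\perp\subseteq S^\perp$ for all $\mu,\nu$. Since $S^\perp$ is a closed subspace, hence closed and structured for the vector-space structure, the definition of invariant structure preserving (applied with $n=1$ and the single operator $X$) yields that $S^\perp$ is jointly invariant for $f(X)$, that is $\omega(f(X))S^\perp\subseteq S^\perp$ for all $\omega\in\Omega$. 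Running the same chain of equivalences backwards with $\mathcal{M}',\mathcal{N}'$ in place of $\mathcal{M},\mathcal{N}$ gives $(\mathcal{N}'^*\otimes S^*)f(X)\subseteq(\mathcal{M}'^*\otimes S^*)$, which is the claim (reading $f(\mathcal{T})$ pointwise over $X\in\mathcal{T}$).

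\textbf{Main obstacle.} There is no genuine difficulty; the only point needing care is the bookkeeping in part (2) that matches ``$\mathcal{N}^*\otimes S^*$ acting on the left of $X$'' with ``$S^\perp$ invariant for the coordinate operators of $X$'', i.e. keeping straight the two distinct uses of adjoint/orthogonal complement — one on the tensor leg $\mathcal{H}$ (relating left-action by functionals from $S$ to $X^*(\mathcal{N}\otimes S)\subseteq\mathcal{M}\otimes S$) and one converting co-invariance of $S$ into invariance of $S^\perp$ — together with fixing the Riesz identifications $S^*\cong S$ and $(\mathcal{M}\otimes\mathcal{H})^*\cong\mathcal{M}\otimes\mathcal{H}$ consistently. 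A wholly equivalent route avoids complements: verify directly that each $p\in P(\mathcal{T},\mathcal{M}',\mathcal{N}')$, being a span of products $E_N\cdots E_1$ with $E_i=(W_i\otimes I_{\mathcal{H}})X(V_i\otimes I_{\mathcal{H}})$ whose adjoints move the $S$-leg only through $X^*$ (which preserves $\mathcal{N}\otimes S$ by hypothesis) while $V_i^*\otimes I_{\mathcal{H}}$ and $W_i^*\otimes I_{\mathcal{H}}$ act on the $\mathcal{M},\mathcal{N}$ legs alone, satisfies $(\mathcal{N}'^*\otimes S^*)p(X)\subseteq(\mathcal{M}'^*\otimes S^*)$; note this condition on $f(X)$ is WOT-closed; and conclude by Corollary \ref{mainresult}.
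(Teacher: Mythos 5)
Your proposal is correct and in the spirit of the paper, which presents this corollary without an explicit proof (``we immediately see''). Part (1) is exactly the earlier proposition $H(\mathcal{T})\subseteq F(\mathcal{T})$ applied to a Hilbert space with the vector-space structure, or equivalently the observation you make that $F$ is WOT-closed, contains the polynomials, and hence contains $\overline{P}^{WOT}=H$. For part (2), your unwinding is right: $(\mathcal{N}^*\otimes S^*)X\subseteq(\mathcal{M}^*\otimes S^*)$ is the same as $X^*(\mathcal{N}\otimes S)\subseteq\mathcal{M}\otimes S$, i.e.\ $X(\mathcal{M}\otimes S^\perp)\subseteq\mathcal{N}\otimes S^\perp$, i.e.\ $\lambda_{\mu,\nu}(X)S^\perp\subseteq S^\perp$ for all $\mu,\nu$; since $S^\perp$ is a closed subspace (hence closed and structured) this is precisely ``$S^\perp$ is jointly invariant for $X$,'' and applying the definition of invariant-structure-preserving with $n=1$ hands back joint invariance of $S^\perp$ for $f(X)$, which unwinds to the stated inclusion. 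The alternative route you sketch (check the property for words $E_N\cdots E_1$, note it is WOT-closed, invoke Corollary \ref{mainresult}) is equally valid and is probably closer to the literal reading of ``immediately'' since the corollary is placed right after the WOT-density statement. Both routes you give are sound and the bookkeeping is handled correctly.
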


We say that $\hat{\mathcal{T}}$ is an \dfn{envelope} of $\mathcal{T}$ if for every element of $X \in \hat{\mathcal{T}}$ there are $Y_1,\ldots,Y_n \in \mathcal{T}$ 
and a joint invariant or coinvariant subspace $S$ of  $Y=\oplus^N_{i=1} Y_i$ and an invertible operator $A$ such that
 $X=(I_{\mathcal{N}} \otimes A^{-1})  P_SY|_S (I_{\mathcal{M}} \otimes A).$
\begin{corollary} Let $\hat{\mathcal{T}}$ be an envelope of $\mathcal{T}.$
	Any function $f \in H(\mathcal{T})$ must continue uniquely to $\hat{\mathcal{T}}.$
\end{corollary}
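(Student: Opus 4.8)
The plan is to write down the only formula an extension could possibly have, and then to verify it is well defined by reducing to polynomials via Corollary \ref{mainresult}. Given $X \in \hat{\mathcal{T}}$, pick a representation $X = (I_{\mathcal{N}} \otimes A^{-1}) P_S Y|_S (I_{\mathcal{M}} \otimes A)$ with $Y_1,\dots,Y_N \in \mathcal{T}$, $Y = \oplus_{i=1}^N Y_i$, $S$ a joint invariant or coinvariant subspace of $Y$, and $A$ invertible, and set
\[
\hat{f}(X) := (I_{\mathcal{N}'} \otimes A^{-1}) \, P_S \Big( \bigoplus_{i=1}^N f(Y_i) \Big)\Big|_S \, (I_{\mathcal{M}'} \otimes A).
\]
Because $f \in H(\mathcal{T})$ preserves joint invariant subspaces and, by the preceding corollary, joint coinvariant subspaces, $S$ is again invariant (resp.\ coinvariant) for $\oplus_i f(Y_i)$, so the compression makes sense; taking $N=1$, $S=\mathcal{H}$, $A=I$ recovers $f$ on $\mathcal{T}$. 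The entire content of the corollary is that this formula is independent of the chosen representation and supplies the unique continuation inside $H(\hat{\mathcal{T}})$.

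To prove independence I would first treat polynomials. For $p \in P(\mathcal{T},\mathcal{M}',\mathcal{N}')$, the analogous expression with $f$ replaced by $p$ equals the direct evaluation $p(X)$ of the noncommutative polynomial at $X$: noncommutative polynomials are equivariant under the similarity $(\cdot) \mapsto (I \otimes A^{-1})(\cdot)(I \otimes A)$, respect finite direct sums, and commute with compression to a semi-invariant subspace; since both an invariant and a coinvariant subspace are semi-invariant, $P_S p(Y)|_S = p(P_S Y|_S)$ by Sarason's lemma. Hence for polynomials the formula visibly depends on $X$ alone, and every polynomial extends canonically to $\hat{\mathcal{T}}$ via $p \mapsto p(X)$.

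Next I would bootstrap to $f$ using weak pointwise density. Fix a net $p_\alpha \to f$ with $p_\alpha \in P(\mathcal{T},\mathcal{M}',\mathcal{N}')$, supplied by Corollary \ref{mainresult}. For $X$ as above the polynomial case gives $p_\alpha(X) = (I_{\mathcal{N}'} \otimes A^{-1}) P_S (\oplus_i p_\alpha(Y_i))|_S (I_{\mathcal{M}'} \otimes A)$; since $p_\alpha(Y_i) \to f(Y_i)$ weakly and finite direct sums, compressions, and conjugation by the fixed operators $I \otimes A^{\pm 1}$ are weakly continuous, $p_\alpha(X) \to \hat{f}(X)$. Thus $\hat{f}(X) = \lim_\alpha p_\alpha(X)$, a quantity depending only on $X$ and the net — which simultaneously yields well-definedness (the same limit arises for every representation and every such net) and shows $p_\alpha \to \hat{f}$ pointwise-weakly on all of $\hat{\mathcal{T}}$; as each $p_\alpha$ lies in $H(\hat{\mathcal{T}})$ and that space is closed under weak pointwise limits, $\hat{f} \in H(\hat{\mathcal{T}})$. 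Uniqueness is then immediate: any $g \in H(\hat{\mathcal{T}})$ with $g|_{\mathcal{T}} = f$ must satisfy $g(X) = (I \otimes A^{-1}) P_S (\oplus_i g(Y_i))|_S (I \otimes A) = \hat{f}(X)$, since invariant structure preserving functions respect direct sums, compressions to semi-invariant subspaces, and similarities.

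The main obstacle is exactly the well-definedness step, i.e.\ the passage from the formula to the representation-free characterization $\hat{f}(X) = \lim_\alpha p_\alpha(X)$. The analytic input is light — only weak continuity of the structural operations and the density in Corollary \ref{mainresult} — but one must carefully check the routine (if notation-heavy) functoriality of noncommutative polynomials under direct sums, similarities, and semi-invariant compressions; in particular the appeal to Sarason's semi-invariant subspace lemma is precisely what lets ``invariant or coinvariant'' rather than ``invariant'' suffice in the definition of an envelope.
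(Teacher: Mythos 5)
Your argument is correct and is precisely the route the paper leaves implicit: the corollary is stated without proof, as an immediate consequence of Corollary \ref{mainresult}, and what that density statement buys you is exactly the reduction to the polynomial case followed by a WOT-pointwise limit, which is what you carry out. Two small points worth being explicit about: well-definedness of $\hat f$ and uniqueness both ultimately rest on the same fact (that $H(\hat{\mathcal T},\mathcal M',\mathcal N')$ is the WOT-pointwise closure of polynomials, applied once on $\mathcal T$ and once on $\hat{\mathcal T}$), so there is no separate ``invariant structure preserving functions respect semi-invariant compressions'' lemma being used -- you should read that phrase in your last paragraph as shorthand for re-running the polynomial approximation with a net converging to $g$; and the definition of envelope does not literally say $\mathcal T\subseteq\hat{\mathcal T}$, so ``taking $N=1$, $S=\mathcal H$, $A=I$ recovers $f$'' is really the observation that the formula is consistent with $f$ on the overlap, which is all that ``continue'' requires.
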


  For example, every invariant structure preserving function defined on the bilateral
shift extends to all contractions.

\section{An Oka-Weil Kaplansky density theorem for free functions} \label{ballsection}

\subsection{Some background}
Let $\mathcal{T} \subseteq \mathbf{B}(\mathcal{M} \otimes \mathcal{H}, \mathcal{N} \otimes \mathcal{H}).$
We define the norm of $f\in H(\mathcal{T},\mathcal{M}',\mathcal{N}')$ to be $\|f\|=\sup_{X\in \mathcal{T}} \|f(X)\|.$ We denote the set of functions with finite norm by $H^\infty(\mathcal{T},\mathcal{M}',\mathcal{N}').$
A natural question from the point of view of approximation theory, along the lines of the Kaplansky density theorem,  is:
\begin{problem}[Kaplansky density problem] Does
\beq \label{problem} \overline{\textrm{ball}(P(\mathcal{T},\mathcal{M}',\mathcal{N}'))}^{WOT} = \overline{\textrm{ball}}(H^{\infty}(\mathcal{T},\mathcal{M}',\mathcal{N}'))?\eeq
\end{problem}
For example, one might want to approximate a bounded function by a sequence of uniformly bounded polynomials, as in the Stone-Weierstrass theorem or the Oka-Weil theorem \cite{AGMCGlobal}.
We note that if $\mathcal{T}$ is a bounded set of self-adjoint tuples, then \eqref{problem} has a positive solution.
However, this is impossible in general for reasons we will now describe, even in one variable. (That is, $\mathcal{M}=\mathcal{N}=\mathbb{C}.$)
Given $X \in \mathbf{B}(\mathcal{M} \otimes \mathcal{H}, \mathcal{N} \otimes \mathcal{H}),$ let $\mathcal{T}_X = \{ X\}$. Let $\textrm{Alg}_X$ denote the unital algebra generated by the coordinates of $X.$
By Theorem \ref{mainresult}, the question for such a $\mathcal{T}_X$ reduces to the following Kaplansky density theorem type statement:
$$\overline{\textrm{ball}(\textrm{Alg}_X)}^{WOT} = \overline{\textrm{ball}}(\overline{\textrm{Alg}_X}^{WOT}),$$
which is false in general due to counterexamples obtained classically by Wogen \cite{Wogen}-- in fact there is a single operator $T \in \BH$ for which this fails.
(The author thanks Mike Jury and Nik Weaver for pointing out this reference \cite{nik}.)

Let $\delta: \mathbf{B}(\mathcal{M} \otimes \mathcal{H}, \mathcal{N} \otimes \mathcal{H}) \rightarrow \mathbf{B}(\mathcal{K} \otimes \mathcal{H})$ be a noncommutative polynomial function.
	Define the open polynomially convex set corresponding to $\delta$:
		$$B_\delta = \{X | \|\delta(X)\|<1\}.$$
	(We only deal with square $\delta$ for notational convenience.)
	We say $\delta$ is \dfn{Archimedian} if $\delta = \delta' \oplus q_C$ for some $C>0$ where $q_C = (Cx_1,\ldots,Cx_d).$
	This essentially ensures that $B_\delta$ is bounded.
	The following is a consequence of the Helton-McCullough Positivstellensatz.
	\begin{theorem}[Helton-McCullough \cite{HK04}]
		Suppose $\mathcal{M},\mathcal{N}$ are finite dimensional.
		Suppose $\delta$ is Archimedian.
		Let $p \in P(B_\delta,\mathcal{M}',\mathcal{N}').$
		If $\|p\|_{H^\infty(B_\delta)}<1,$ then 
			$$1-p^*p = \sum q_i^*q_i + \sum r_i^*(1-\delta^*\delta)r_i.$$
		for some vector valued matrix polynomials $q_i$ and $r_i.$
	\end{theorem}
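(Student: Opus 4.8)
\emph{Proof proposal.} The plan is to obtain the statement as a direct consequence of the matrix-coefficient Helton--McCullough Positivstellensatz, applied to the symmetric noncommutative polynomial $q := 1 - p^*p$, which takes values in $\mathbf{B}(\mathcal{M}'\otimes\mathcal{H})$ (and to the polynomial $1-\delta^*\delta$ with values in $\mathbf{B}(\mathcal{K}\otimes\mathcal{H})$). Since the supremum $\|p\|_{H^\infty(B_\delta)}$ is \emph{strictly} less than $1$, there is $\varepsilon>0$ with $p(X)^*p(X)\preceq (1-\varepsilon)I$, hence $q(X)\succeq \varepsilon I$, for every $X\in B_\delta$. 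Because $\mathcal{H}$ is infinite dimensional, the operator-level domain $B_\delta$ already ``sees'' every finite matrix tuple with $\|\delta(\cdot)\|<1$ (via ampliation, absorbing $\mathbb{C}^k$ into $\mathcal{H}$), so $q$ is uniformly positive definite on the whole matricial open set $\{X:\|\delta(X)\|<1\}$.

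Next I would pass to the closed matricial domain $\mathcal{D}_\delta:=\{X: I-\delta(X)^*\delta(X)\succeq 0\}$. Here one uses the Archimedean hypothesis: writing $\delta=\delta'\oplus q_C$ forces $B_\delta\subseteq\{X:\|q_C(X)\|<1\}$, so $B_\delta$ is bounded and $p$, hence $q$, is norm-continuous on $\overline{B_\delta}$; combined with the uniform lower bound $q\succeq\varepsilon I$ on the open domain this yields $q\succeq \varepsilon I$ on $\mathcal{D}_\delta$ at every matrix level. Finite dimensionality of $\mathcal{M},\mathcal{N}$ (and of $\mathcal{M}',\mathcal{N}'$) is exactly what guarantees that $\delta$ and $p$ are honest free polynomials with finite-dimensional matrix coefficients, so that the Helton--McCullough machinery is available.

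With these reductions in hand, I would invoke the Positivstellensatz of \cite{HK04}: since $\delta$ is Archimedean the quadratic module generated by $1-\delta^*\delta$ is Archimedean, and a symmetric noncommutative polynomial that is uniformly positive definite on the bounded matricial positivity domain of $1-\delta^*\delta$ lies in that quadratic module. Applying this to $q=1-p^*p$ produces finitely many vector/matrix-valued noncommutative polynomials $q_i$ (mapping $\mathcal{M}'\otimes\mathcal{H}$ into auxiliary $\mathcal{P}_i\otimes\mathcal{H}$) and $r_i$ (mapping $\mathcal{M}'\otimes\mathcal{H}$ into $\mathcal{K}\otimes\mathcal{H}$) with $1-p^*p=\sum q_i^*q_i+\sum r_i^*(1-\delta^*\delta)r_i$, which is the assertion.

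The genuinely substantive content is the cited Positivstellensatz itself; within this excerpt the only work is the reduction above. I expect the one delicate point to be the passage from strict positivity of $q$ on the \emph{open} set $B_\delta$ to strict positivity on its matricial closure $\mathcal{D}_\delta$ --- i.e.\ verifying that no positivity is lost along the boundary $\|\delta(X)\|=1$. This is routine given boundedness and continuity, but it is the step most easily glossed over, and it is precisely where the Archimedean hypothesis on $\delta$ is used.
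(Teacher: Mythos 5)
The paper offers no proof of this theorem: it is stated as a black-box consequence of the Helton--McCullough Positivstellensatz from \cite{HK04}, so there is no argument in the text to compare yours against line by line. That said, your derivation is precisely the unpacking the citation intends. The strict bound $\|p\|_{H^\infty(B_\delta)}<1$ gives a uniform gap $1-p^*p\succeq\varepsilon I$ on $B_\delta$; the Archimedean decomposition $\delta=\delta'\oplus q_C$ guarantees both that $B_\delta$ is bounded and that the quadratic module generated by $1-\delta^*\delta$ is Archimedean; finite dimensionality of $\mathcal{M},\mathcal{N},\mathcal{M}',\mathcal{N}'$ makes $\delta$ and $p$ honest free matrix polynomials; and the Positivstellensatz then returns the displayed sum-of-squares representation with matrix polynomial weights of the correct block shapes. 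The caveat you flag yourself --- passing from uniform positivity on the \emph{open} set $B_\delta$ to uniform positivity on the closed matricial positivity domain $\mathcal{D}_\delta=\{X:I-\delta(X)^*\delta(X)\succeq 0\}$ --- is indeed the only genuinely delicate step: one needs $\mathcal{D}_\delta\subseteq\overline{B_\delta}$ level by level, and this is not a formal consequence of boundedness and continuity alone (degenerate $\delta$ can have positivity-domain points not approximable from the strict interior). It does hold under the conventions in force in \cite{HK04}, and the paper passes over the same point silently, so your proposal is faithful to the intended route.
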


	Using the lurking isometry argument \cite{bmg,AGMCGlobal}, we see that this implies that there is a function
	$g \in \overline{\textrm{ball}}(H^{\infty}(\textrm{ball}(\mathbf{B}(\mathcal{K} \otimes \mathcal{H}),\mathcal{M}',\mathcal{N}'))$ such that
	$g\circ \delta = p.$ (Here, $\textrm{ball}(\mathbf{B}( \mathcal{K} \otimes \mathcal{H})$ may be a domain in infinitely many variables.)
	Therefore, if $p_\lambda$ is a net in $\textrm{ball}(P(B_\delta),\mathcal{M}',\mathcal{N}')$ converging to $f \in \overline{\textrm{ball}}(H^{\infty}(B_\delta,\mathcal{M}',\mathcal{N}')),$ then there is $g \in \overline{\textrm{ball}}(H^{\infty}(\textrm{ball}(\mathbf{B}( \mathcal{K} \otimes \mathcal{H}),\mathcal{M}',\mathcal{N}'))$
	such that $g\circ \delta = f.$ Therefore, we have transfer function realizations for exactly the functions in  $\overline{\textrm{ball}(P(B_\delta,\mathcal{M}',\mathcal{N}'))}^{WOT},$ and therefore can approximate by a sequence of polynomials pointwise
	in norm. So we have that the Kaplansky density problem is equivalent to  an Oka-Weil type statement.
	(Our informal treatment above will be rectified later by Theorem \ref{bigthm} and Theorem \ref{biggerthm})
	\begin{proposition}\label{gooddomains}
		Suppose $\mathcal{M},\mathcal{N}$ are finite dimensional.
		Suppose $\delta$ is Archimedian.
		If $f \in \overline{\textrm{ball}(P(B_\delta,\mathcal{M}',\mathcal{N}'))}^{WOT},$ then there exists
		$g\in \overline{\textrm{ball}}(H^{\infty}(\textrm{ball}(\mathbf{B}(\mathcal{K} \otimes \mathcal{H}),\mathcal{M}',\mathcal{N}'))$ such that $f = g \circ \delta.$
		Moreover, the following are equivalent:
		\begin{enumerate}
			\item $\overline{\textrm{ball}(P(B_\delta,\mathcal{M}',\mathcal{N}'))}^{WOT} = \overline{\textrm{ball}}(H^{\infty}(B_\delta,\mathcal{M}',\mathcal{N}')),$
			\item $\overline{\textrm{ball}(P(B_\delta,\mathcal{M}',\mathcal{N}'))}^{\textrm{ptwise in norm}}
			= \overline{\textrm{ball}}(H^{\infty}(B_\delta,\mathcal{M}',\mathcal{N}')),$
			\item every function $f \in \overline{\textrm{ball}}(H^{\infty}(B_\delta,\mathcal{M}',\mathcal{N}'))$ can be written as $ f= g \circ \delta$ where
			$g\in \overline{\textrm{ball}}(H^{\infty}(\textrm{ball}(\mathbf{B}( \mathcal{K} \otimes \mathcal{H}),\mathcal{M}',\mathcal{N}'))).$
		\end{enumerate}
	\end{proposition}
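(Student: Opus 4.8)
The plan is to establish the realization statement first and then read off the equivalence of (1), (2), (3) by composing with $\delta.$ For the realization statement, take a net $(p_\lambda)$ in $\textrm{ball}(P(B_\delta,\mathcal{M}',\mathcal{N}'))$ converging to $f$ in the topology of weak pointwise convergence. For each $k\in\mathbb{N},$ one has $\|(1-\tfrac1k)p_\lambda\|_{H^\infty(B_\delta)}\le 1-\tfrac1k<1,$ so the Helton-McCullough Positivstellensatz applies to $(1-\tfrac1k)p_\lambda$ and the lurking isometry argument recalled in the discussion above yields $g_{\lambda,k}\in\overline{\textrm{ball}}(H^\infty(\textrm{ball}(\mathbf{B}(\mathcal{K}\otimes\mathcal{H})),\mathcal{M}',\mathcal{N}'))$ with $g_{\lambda,k}\circ\delta=(1-\tfrac1k)p_\lambda.$

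The engine of the argument is compactness: the set $\overline{\textrm{ball}}(H^\infty(\textrm{ball}(\mathbf{B}(\mathcal{K}\otimes\mathcal{H})),\mathcal{M}',\mathcal{N}'))$ is compact in the topology of weak pointwise convergence, since for each $Y$ in the open ball the evaluation at $Y$ takes values in the weak-operator-compact ball of $\mathbf{B}(\mathcal{M}'\otimes\mathcal{H},\mathcal{N}'\otimes\mathcal{H}),$ so the set embeds into a Tychonoff product of compacta, and it is closed there because the space of free functions is closed under weak pointwise limits (as recorded earlier in the section) and such limits do not increase the norm. Fixing $k$ and passing to a convergent subnet gives $g_{\lambda,k}\to g_k$; evaluating at $Y=\delta(X)$ for $X\in B_\delta$ — legitimate since $\|\delta(X)\|<1$ — and using $p_\lambda(X)\to f(X)$ gives $g_k\circ\delta=(1-\tfrac1k)f.$ Passing to a convergent subnet in $k$ produces $g$ in the same compact ball with $g\circ\delta=f,$ which is the first assertion.

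For the equivalences I would prove $(2)\Rightarrow(1)\Rightarrow(3)\Rightarrow(2).$ Here $(2)\Rightarrow(1)$ is immediate, since pointwise-in-norm convergence implies weak pointwise convergence, so $\overline{\textrm{ball}(P(B_\delta,\mathcal{M}',\mathcal{N}'))}^{\textrm{ptwise in norm}}\subseteq\overline{\textrm{ball}(P(B_\delta,\mathcal{M}',\mathcal{N}'))}^{WOT}\subseteq\overline{\textrm{ball}}(H^\infty(B_\delta,\mathcal{M}',\mathcal{N}')),$ the last inclusion holding because $H(B_\delta,\mathcal{M}',\mathcal{N}')$ is closed under weak pointwise limits and the norm is lower semicontinuous for that topology. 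For $(1)\Rightarrow(3),$ under (1) every $f\in\overline{\textrm{ball}}(H^\infty(B_\delta,\mathcal{M}',\mathcal{N}'))$ lies in $\overline{\textrm{ball}(P(B_\delta,\mathcal{M}',\mathcal{N}'))}^{WOT},$ hence by the realization statement it factors as $g\circ\delta$ with $g\in\overline{\textrm{ball}}(H^\infty(\textrm{ball}(\mathbf{B}(\mathcal{K}\otimes\mathcal{H})),\mathcal{M}',\mathcal{N}')),$ which is exactly (3). For $(3)\Rightarrow(2),$ write $f=g\circ\delta$ and approximate $g$ pointwise in norm on the open ball $\textrm{ball}(\mathbf{B}(\mathcal{K}\otimes\mathcal{H}))$ by contractive free polynomials $q_\mu$ — the Oka-Weil statement for the noncommutative ball itself, which follows from a transfer function realization of $g$ together with its noncommutative Fej\'er means, each a contractive polynomial converging to $g$ on the open ball. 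Since $\delta$ maps $B_\delta$ into the open ball, each $q_\mu\circ\delta$ is a contractive element of $P(B_\delta,\mathcal{M}',\mathcal{N}')$ and $q_\mu\circ\delta\to g\circ\delta=f$ pointwise in norm; as pointwise-in-norm limits of contractive free functions are again contractive free functions, this yields (2).

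The step I expect to be the main obstacle is neither the Positivstellensatz (black-boxed) nor the lurking isometry (standard), but the bookkeeping around compactness and the base case: one must confirm that the ball of free functions is genuinely closed for weak pointwise convergence even with infinitely many variables present, that the noncommutative Fej\'er means are actually contractive and converge pointwise in norm on the open ball, and that $q_\mu\circ\delta$ honestly belongs to the polynomial class $P(B_\delta,\mathcal{M}',\mathcal{N}').$ Once those points are settled the remainder is routine limit-chasing.
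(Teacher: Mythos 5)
Your argument is essentially the paper's own (the paper leaves this proposition's proof to the informal discussion immediately preceding it and defers rigor to Theorems \ref{bigthm} and \ref{biggerthm}), and you fill two small gaps the paper glosses over. First, the Helton--McCullough theorem as stated requires $\|p\|_{H^\infty(B_\delta)}<1$ strictly, while a net in the \emph{closed} ball only satisfies $\le 1$; your $(1-\tfrac1k)$ dilation handles this cleanly, and the paper omits it. Second, you make explicit the compactness argument (Banach--Alaoglu $+$ Tychonoff $+$ closedness of the ball of free functions under weak pointwise limits, which the paper does record) needed to extract a single realizing $g$ from the net $g_{\lambda,k}$; the paper simply asserts ``then there is $g$'' without saying how the $g_\lambda$'s are aggregated. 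Your $(3)\Rightarrow(2)$ step uses noncommutative Fej\'er means to truncate the transfer function, whereas the paper (later, in the proof of Theorem \ref{biggerthm}) uses the $r$-dilated geometric partial sums $g_{N,r}$ with $r<1$; both devices produce contractive polynomial approximants converging pointwise in norm, and $q_\mu\circ\delta$ lies in $P(B_\delta,\mathcal{M}',\mathcal{N}')$ because $\delta$ is a noncommutative polynomial, so the compositions remain in the polynomial class. If you want to match the paper exactly you could swap Fej\'er for the $r$-dilation trick, which sidesteps any need to verify contractivity of the Ces\`aro kernel in the operator-coefficient setting; otherwise your version stands, provided you say a word about why the Fej\'er means of the realization series are contractive rather than merely bounded.
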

A result of Agler-McCarthy and Ball-Marx-Vinnikov \cite{AGMCOP, BMV16} states that whenever there is a matrix input that makes $\delta$ contractive, then $B_\delta$ satisfies the above theorem.

We are left with the following refinement of the Kaplansky density problem as a possible obstruction to realization theory:
\begin{problem} \label{bigproblem}
Does the Kaplansky density problem hold on every open polynomially convex $B_\delta$?
\end{problem}

\subsection{Solution to Problem \ref{bigproblem}}

We now show that Problem \ref{bigproblem} has a positive solution.
First we prove a Lyapunov type condition. We were heavily inspired by theorems from the matrix case proven in \cite{BMV16,BMV162, bmg}.
\begin{theorem}\label{bigthm}
	Let $\mathcal{H},\mathcal{J}, \mathcal{K}, \mathcal{M},\mathcal{N}$ be infinite dimensional Hilbert spaces.
	Let $X \in \mathbf{B}(\mathcal{K} \otimes \mathcal{H})$ be a strict contraction.
	Let $Y \in \mathbf{B}(\mathcal{M} \otimes \mathcal{H}, \mathcal{N} \otimes \mathcal{H}).$
	Suppose that for every self-adjoint $H \in \mathbf{B}(\mathcal{H} \otimes \mathcal{J})$
	we have that if 
		$I_{\mathcal{K}} \otimes H - (X \otimes I_{\mathcal{J}})^*(I_{\mathcal{K}} \otimes H)(X \otimes I_{\mathcal{J}})$ is positive semidefinite,
	then
		$I_{\mathcal{M}} \otimes H - (Y \otimes I_{\mathcal{J}})^*(I_{\mathcal{N}} \otimes H)(Y \otimes I_{\mathcal{J}})$ is positive semidefinite.
	Then there is an auxillary Hilbert space $\mathcal{P},$ a representation $\pi:\mathbf{B}(\mathcal{K}) \rightarrow \mathbf{B}(\mathcal{P})$ and an isometry 
		$$\bbm A & B \\ C & D \ebm: \mathcal{M}\oplus \mathcal{P} \rightarrow \mathcal{N} \oplus \mathcal{P}$$
	such that
		{\tiny$$Y= (A\otimes I_\mathcal{H}) + (B\otimes I_\mathcal{H}) (\pi\otimes \mathrm{id}_{\BH})(X)(I_{\mathcal{P}\otimes\mathcal{H}}-(D\otimes I_\mathcal{H}) (\pi\otimes \mathrm{id}_{\BH})(X))^{-1}(C\otimes I_\mathcal{H}).$$}
		
\end{theorem}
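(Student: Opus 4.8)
The plan is to read the conclusion as a transfer-function realization and to produce the colligation $\bbm A & B \\ C & D \ebm$ by the lurking-isometry method, fed by an Agler-type decomposition that is itself extracted from the Lyapunov hypothesis via Hahn--Banach separation; this is the operator-coefficient, infinite-dimensional analogue of the matrix-case arguments of \cite{BMV16, BMV162, bmg}. Throughout, abbreviate $\Phi_X(H) = I_{\mathcal K}\otimes H - (X\otimes I_{\mathcal J})^*(I_{\mathcal K}\otimes H)(X\otimes I_{\mathcal J})$ and $\Psi_Y(H) = I_{\mathcal M}\otimes H - (Y\otimes I_{\mathcal J})^*(I_{\mathcal N}\otimes H)(Y\otimes I_{\mathcal J})$, so that the hypothesis reads $\Phi_X(H)\ge 0 \Rightarrow \Psi_Y(H)\ge 0$ for all self-adjoint $H$; write $\tilde X = (\pi\otimes\mathrm{id}_{\BH})(X)$, and note that since a normal representation of $\mathbf{B}(\mathcal K)$ is an ampliation, $\tilde X$ is again a strict contraction, so $(I - (D\otimes I_{\mathcal H})\tilde X)^{-1}$ is bounded whenever $\bbm A & B \\ C & D \ebm$ is a contraction and the displayed formula is meaningful.

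First I would record the easy direction that orients the argument: if $\bbm A & B \\ C & D \ebm$ is an isometry and $Y$ is given by the displayed formula, then with $L = (I - (D\otimes I_{\mathcal H})\tilde X)^{-1}(C\otimes I_{\mathcal H})$ the usual colligation computation gives $I_{\mathcal M\otimes\mathcal H} - Y^*Y = L^*(I - \tilde X^*\tilde X)L$, and substituting $H$ into this identity shows $\Phi_X(H)\ge 0 \Rightarrow \Psi_Y(H)\ge 0$. So it suffices to reverse the implication: from the Lyapunov hypothesis produce $\mathcal P$, $\pi$, $L$, and eventually the colligation.

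For the reversal, consider the cone $\mathcal C_X = \{ H = H^* \in \mathbf{B}(\mathcal H\otimes\mathcal J) : \Phi_X(H)\ge 0\}$ and the analogous $\mathcal C_Y$; the hypothesis is $\mathcal C_X\subseteq\mathcal C_Y$. Because $\mathcal J$ is infinite dimensional this single inclusion already carries the full matricial/complete information, and a Hahn--Banach separation in a suitable weak-$*$ topology --- as in \cite{BMV16, bmg}, in the spirit of Effros--Winkler-type matricial separation --- converts it into a dual certificate: a Hilbert space $\mathcal P$, a normal (hence ampliation-type) representation $\pi:\mathbf{B}(\mathcal K)\to\mathbf{B}(\mathcal P)$, and a bounded $L:\mathcal M\otimes\mathcal H\to\mathcal P\otimes\mathcal H$ with $I_{\mathcal M\otimes\mathcal H} - Y^*Y = L^*(I - \tilde X^*\tilde X)L$, where $\tilde X = (\pi\otimes\mathrm{id}_{\BH})(X)$; infinite-dimensionality of $\mathcal H, \mathcal J, \mathcal K, \mathcal P$ is used to absorb multiplicities and to ensure the GNS space of the separating functional carries a normal $\mathbf{B}(\mathcal K)$-action, so that $\pi\otimes\mathrm{id}_{\BH}$ may legitimately be evaluated at $X$. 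Then comes the lurking isometry: rewriting as $Y^*Y + L^*\tilde X^*\tilde X L = I + L^*L$ shows $v\oplus \tilde X L v \mapsto Yv\oplus Lv$ is isometric from a subspace of $(\mathcal M\oplus\mathcal P)\otimes\mathcal H$ onto a subspace of $(\mathcal N\oplus\mathcal P)\otimes\mathcal H$; one extends it to an isometry of the full spaces (enlarging $\mathcal P$ by a fresh ampliation summand if necessary, which leaves $\pi$ and $\tilde X$ undisturbed) and, using once more that $\mathcal H$ is infinite dimensional, takes the extension in the tensored form $\hat U = \bbm A & B \\ C & D \ebm\otimes I_{\mathcal H}$. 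Reading off $\hat U(v\oplus \tilde X L v) = Yv\oplus Lv$ yields $(A\otimes I)v + (B\otimes I)\tilde X L v = Yv$ and $(C\otimes I)v + (D\otimes I)\tilde X L v = Lv$; the second equation gives $L = (I - (D\otimes I_{\mathcal H})\tilde X)^{-1}(C\otimes I_{\mathcal H})$, and substitution into the first is precisely the displayed formula.

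I expect the main obstacle to be the separation step: making the relevant cone closed in a topology where Hahn--Banach applies, and --- more delicately --- arguing that the resulting dual object is a genuine \emph{normal} $*$-representation of $\mathbf{B}(\mathcal K)$ rather than merely a completely positive map, since only then is $(\pi\otimes\mathrm{id}_{\BH})(X)$ meaningful. The finite-dimensional treatments of \cite{AGMCOP, BMV16} rely on compactness and on the presence of a scalar or matrix point in the domain, neither of which is available here, so the infinite-dimensionality hypotheses must be deployed in their place; a secondary nuisance is arranging the extended isometry to have the exact tensor form $\bbm A & B \\ C & D \ebm\otimes I_{\mathcal H}$ while keeping $\pi$ and $\tilde X$ fixed.
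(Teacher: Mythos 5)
Your skeleton --- extract an Agler-type quadratic identity, then run the lurking isometry --- is the right shape, and the final colligation algebra is correct. But the two places you flag as obstacles are genuine gaps that your sketch does not fill, and the paper handles both by a route different from the one you propose. You reach for a Hahn--Banach/Effros--Winkler separation applied to the cone inclusion $\mathcal{C}_X\subseteq\mathcal{C}_Y$; this is neither carried out nor necessary. The paper instead notes that since $X$ is a strict contraction, the Stein map $H\mapsto I_{\mathcal{K}}\otimes H - X^*(I_{\mathcal{K}}\otimes H)X$ is injective on self-adjoints, so
\[
\varphi\bigl(I_{\mathcal{K}}\otimes H - X^*(I_{\mathcal{K}}\otimes H)X\bigr) \;=\; I_{\mathcal{M}}\otimes H - Y^*(I_{\mathcal{N}}\otimes H)Y
\]
is a \emph{well-defined} linear map on the operator system these expressions span inside $\mathbf{B}(\mathcal{K}\otimes\mathcal{H})$; it is Archimedean (take $H=I$), and it is completely positive exactly because the hypothesis quantifies over $H\in\mathbf{B}(\mathcal{H}\otimes\mathcal{J})$ with $\mathcal{J}$ infinite dimensional. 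Arveson extension plus Stinespring then hand over $V$ and a representation $\hat\pi$ of $\mathbf{B}(\mathcal{K}\otimes\mathcal{H})$ with $V^*\hat\pi(\cdot)V=\varphi(\cdot)$. No cone-closure or weak-$*$ duality questions ever arise, and you are never in the position of having to upgrade a separating functional to a representation.

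The second gap is the tensor decoupling. You set up the isometry on $(\mathcal{M}\oplus\mathcal{P})\otimes\mathcal{H}$ from the $H=I$ identity and then assert both that the extension can be taken of the form $\bbm A&B\\C&D\ebm\otimes I_{\mathcal{H}}$ and that $\pi$ is normal, hence an ampliation; neither is derived, and enlarging $\mathcal{P}$ by a fresh summand does not by itself force a tensor factorization of the extension. The paper gets both for free by polarization: testing $V^*\hat\pi(\Phi_X(H))V=\Psi_Y(H)$ on rank-one $H=ww^*$ and summing over an orthonormal basis forces $\hat\pi=\pi\otimes\mathrm{id}_{\BH}$ with $\mathcal{L}=\mathcal{P}\otimes\mathcal{H}$, and then testing on $H=vw^*$ produces the lurking-isometry relation already compressed by $I\otimes w^*$ on each side, so the resulting isometry acts between $\mathcal{M}\oplus\mathcal{P}$ and $\mathcal{N}\oplus\mathcal{P}$ rather than between their $\mathcal{H}$-tensor enlargements. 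The tensor form is produced by the choice of $H$, not postulated after the fact. In short, you have identified where the work lies, but the operator-system/Stinespring-plus-polarization mechanism that the paper uses is what actually does that work, and your Hahn--Banach sketch leaves both points open.
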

\begin{proof}
	Define the the operator system spanned by elements of the form $I_{\mathcal{K}} \otimes H - X^*(I_{\mathcal{K}} \otimes H)X$ where 
	$H$ ranges over $\mathbf{B}(\mathcal{H}).$ Note that as $X$ is contractive, taking $H = I$
	witnesses that the operator system is Archimedian. (Here Archimedian means that there is an element $T$ such that for any $Z=Z^*,$ there is a $t>0$ such that $Z + tT$ is positive definite.)
	We define a map $$\varphi(I_{\mathcal{K}} \otimes H - X^*(I_{\mathcal{K}} \otimes H)X) = I_{\mathcal{M}} \otimes H - Y^*(I_{\mathcal{N}} \otimes H)Y.$$
	By our assumptions, the map $\varphi$ is completely positive.
	Therefore, by there Stinespring theorem in combination with the Arveson extension theorem,
		the is a representation $\hat{\pi}:\mathbf{B}(\mathcal{K}\otimes \mathcal{H}) \rightarrow \mathbf{B}(\mathcal{L})$
	for some Hilbert space $\mathcal{L}$ and a $V: \mathcal{M} \otimes \mathcal{H} \rightarrow \mathcal{L}$
	such that
		$$V^*\hat{\pi}(I_{\mathcal{K}} \otimes H - X^*(I_{\mathcal{K}} \otimes H)X)V = I_{\mathcal{M}} \otimes H - Y^*(I_{\mathcal{N}} \otimes H)Y.$$
	Applying the formula to $H = ww^*$ and summing (in the strong operator topology) over $w$ in an orthonormal basis to recover a perhaps simpler formula, we see that without loss of generality
	$\hat{\pi} = \pi \otimes \mathrm{id}_{\BH}$
	where $\pi: \mathbf{B}(\mathcal{K}) \rightarrow \mathbf{B}(\mathcal{P})$ and $\mathcal{L} = \mathcal{P} \otimes \mathcal{H}.$
	Now taking $H = vw^*$ and clearing minus signs, we see that 
		$$Y^*(I_{\mathcal{N}} \otimes vw^*)Y + V^*(I_\mathcal{P} \otimes vw^*)V = I_{\mathcal{M}} \otimes vw^* +
		V^*\hat{\pi}(X^*) (I_\mathcal{P} \otimes vw^*)\hat{\pi}(X)V.$$
	Therefore, there is an isometry
		$$\bbm A & B \\ C & D \ebm$$
	such that
		$$\bbm (I_{\mathcal{N}} \otimes w^*)Y \\ (I_{\mathcal{P}} \otimes w^*)V \ebm = \bbm A & B \\ C & D \ebm \bbm (I_{\mathcal{M}} \otimes w^*) \\ (I_{\mathcal{P}} \otimes w^*) \hat{\pi}(X)V \ebm.$$
	Noting that
		$$\bbm A & B \\ C & D \ebm \bbm (I_{\mathcal{M}} \otimes w^*) & 0 \\ 0 & (I_{\mathcal{P}} \otimes w^*) \ebm = 
		\bbm (I_{\mathcal{N}} \otimes w^*) & 0 \\ 0 & (I_{\mathcal{P}} \otimes w^*) \ebm \left(\bbm A & B \\ C & D \ebm \otimes I_{\mathcal{H}} \right),$$
	we see that 
		$$\bbm Y \\ V \ebm = \left(\bbm A & B \\ C & D \ebm \otimes I_{\mathcal{H}}\right) \bbm I_{\mathcal{M}\otimes \mathcal{H}}   \\  \hat{\pi}(X)V \ebm.$$
	one solves for
		{\tiny$$Y= (A\otimes I_\mathcal{H}) + (B\otimes I_\mathcal{H}) (\pi\otimes \mathrm{id}_{\BH})(X)(I_{\mathcal{P}\otimes\mathcal{H}}-(D\otimes I_\mathcal{H}) (\pi\otimes \mathrm{id}_{\BH})(X))^{-1}(C\otimes I_\mathcal{H}).$$}
\end{proof}

Let  $\mathcal{H}$ be infinite dimensional.
Now let $f \in \overline{\textrm{ball}}(H^{\infty}(B_\delta,\mathcal{M}',\mathcal{N}')).$ Pick an $X \in B_\delta.$ We will show that there exists a sequence of polynomials $p \in \overline{\textrm{ball}}(P(B_\delta,\mathcal{M}',\mathcal{N}'))$ such that
$p(X) \rightarrow f(X)$ pointwise in norm.
Note that 
$I_{\mathcal{K}} \otimes H - (\delta(X) \otimes I_{\mathcal{J}})^*(I_{\mathcal{K}} \otimes H)(\delta(X) \otimes I_{\mathcal{J}})$ is positive semidefinite,
	then
$H - (f(X) \otimes I_{\mathcal{J}})^*H(f(X) \otimes I_{\mathcal{J}})$
essentially because the function $f$ is $H^{\infty}(B_\delta,\mathcal{M}',\mathcal{N}').$ (We need that $\mathcal{H}$ is infinite dimensional to ensure that we can realize the point $(I_{\mathcal{N}} \otimes H^{1/2})(X \otimes I_{\mathcal{J}})(I_{\mathcal{M}} \otimes H^{-1/2})$ is noncanonically unitarialy similar to a point in $B_\delta.$)
Therefore,
{\tiny$$f(X)= (A\otimes I_\mathcal{H}) + (B\otimes I_\mathcal{H}) (\pi\otimes \mathrm{id}_{\BH})(\delta(X))(I_{\mathcal{P}\otimes\mathcal{H}}-(D\otimes I_\mathcal{H}) (\pi\otimes \mathrm{id}_{\BH})(\delta(X)))^{-1}(C\otimes I_\mathcal{H}).$$}
Define {\tiny$$g(Z)=(A\otimes I_\mathcal{H}) + (B\otimes I_\mathcal{H}) (\pi\otimes \mathrm{id}_{\BH})(Z)(I_{\mathcal{P}\otimes\mathcal{H}}-(D\otimes I_\mathcal{H}) (\pi\otimes \mathrm{id}_{\BH})(Z))^{-1}(C\otimes I_\mathcal{H}).$$}
Note that $g(Z)$ can be approximated by polynomials of the form
	{\tiny$$g_{N,r}(Z)=(A\otimes I_\mathcal{H}) + (B\otimes I_\mathcal{H}) r(\pi\otimes \mathrm{id}_{\BH})(X)\sum^N_{n=0}(r(D\otimes I_\mathcal{H}) (\pi\otimes \mathrm{id}_{\BH})(Z))^{n}(C\otimes I_\mathcal{H})$$}
where $r<1.$ (Namely, for each $r<1$ there is an $N_0$ such that if $N \geq N_0$ then $rg_{N,r}$ is in $\overline{\textrm{ball}}(H^{\infty}(\textrm{ball}(\mathbf{B}(\mathcal{K} \otimes \mathcal{H}),,\mathcal{M}',\mathcal{N}')).$
Therefore, we have the following result. (Note that we can approximate at finitely many points by viewing $\oplus^N_{i=1} X_i$ as a point in $B_\delta$ by some unitary conjugation.)
\begin{theorem}\label{biggerthm}
	Let  $\mathcal{H}$ be infinite dimensional.
	Let $\delta: \mathbf{B}(\mathcal{M} \otimes \mathcal{H}, \mathcal{N} \otimes \mathcal{H}) \rightarrow \mathbf{B}(\mathcal{K} \otimes \mathcal{H})$ be a noncommutative polynomial function.
	{\tiny$$
	\overline{\textrm{ball}}(H^{\infty}(B_\delta,\mathcal{M}',\mathcal{N}'))= \overline{\textrm{ball}(P(B_\delta,\mathcal{M}',\mathcal{N}'))}^{WOT}= \overline{\textrm{ball}(P(B_\delta,\mathcal{M}',\mathcal{N}'))}^{\textrm{ptwise in norm}}.$$}
	Moreover, for any $f\in H^{\infty}(B_\delta,\mathcal{M}',\mathcal{N}')$ there exists a $g \in \overline{\textrm{ball}}(H^{\infty}(\textrm{ball}(\mathbf{B}(\mathcal{K} \otimes \mathcal{H}),\mathcal{M}',\mathcal{N}'))$
	such that $g\circ \delta = f.$
\end{theorem}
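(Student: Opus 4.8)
The plan is to turn the discussion preceding the statement into a clean argument, with Theorem~\ref{bigthm} as the engine. By homogeneity it suffices to treat the closed unit balls, so the theorem reduces to two things: (i) the single inclusion $\overline{\textrm{ball}}(H^{\infty}(B_\delta,\mathcal{M}',\mathcal{N}'))\subseteq\overline{\textrm{ball}(P(B_\delta,\mathcal{M}',\mathcal{N}'))}^{\textrm{ptwise in norm}}$, and (ii) the existence of the global transfer function $g$. Indeed, $\textrm{ball}(P(B_\delta,\mathcal{M}',\mathcal{N}'))\subseteq\overline{\textrm{ball}}(H^{\infty}(B_\delta,\mathcal{M}',\mathcal{N}'))$ because polynomials are invariant structure preserving, the right-hand side is norm-bounded hence WOT-closed (it is a bounded subset of the WOT-closed space $H(\mathcal{T},\mathcal{M}',\mathcal{N}')$), and pointwise-in-norm convergence implies WOT convergence; so (i) closes the chain of three equalities, and Corollary~\ref{mainresult} is consistent with it.

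First I would fix $X\in B_\delta$ and verify that the pair $(\delta(X),f(X))$ satisfies the hypothesis of Theorem~\ref{bigthm} (after harmless enlargements of $\mathcal{M}',\mathcal{N}',\mathcal{K}$ to obtain the infinite dimensionality that theorem requires; one restricts the resulting realization back to the corner $\mathbf{B}(\mathcal{K}\otimes\mathcal{H})$ at the end). Let $H=H^{*}$ on $\mathcal{H}\otimes\mathcal{J}$ satisfy $I_{\mathcal{K}}\otimes H-(\delta(X)\otimes I_{\mathcal{J}})^{*}(I_{\mathcal{K}}\otimes H)(\delta(X)\otimes I_{\mathcal{J}})\ge 0$. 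For $\epsilon>0$, replacing $H$ by $H+\epsilon I$ and using $\|\delta(X)\|<1$ upgrades this to a strict inequality, so $\tilde X_{\epsilon}=(I_{\mathcal{N}}\otimes(H+\epsilon I)^{1/2})(X\otimes I_{\mathcal{J}})(I_{\mathcal{M}}\otimes(H+\epsilon I)^{-1/2})$ has $\|\delta(\tilde X_{\epsilon})\|<1$; since $\mathcal{H}\otimes\mathcal{J}\cong\mathcal{H}$ are both infinite dimensional, a unitary conjugation carries $\tilde X_{\epsilon}$ to a genuine point $\hat X_{\epsilon}\in B_\delta$. Because $f$ is invariant structure preserving, hence free (the inclusion $H(\mathcal{T})\subseteq F(\mathcal{T})$), it commutes with the ampliation $X\mapsto X\otimes I_{\mathcal{J}}$ and with these unitary and invertible similarities; therefore $\|f(\hat X_{\epsilon})\|\le 1$ unwinds to $I_{\mathcal{M}'}\otimes(H+\epsilon I)-(f(X)\otimes I_{\mathcal{J}})^{*}(I_{\mathcal{N}'}\otimes(H+\epsilon I))(f(X)\otimes I_{\mathcal{J}})\ge 0$, and letting $\epsilon\downarrow 0$ gives the inequality for $H$ itself. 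I expect this step — showing that membership in $\overline{\textrm{ball}}(H^{\infty}(B_\delta))$ really does force the operator inequality on $f(X)$ — to be the main obstacle: it is the only place where infinite dimensionality of $\mathcal{H}$ is essential, and one must be careful both about freeness of $f$ and $\delta$ under these conjugations and ampliations and about producing a strict rather than a merely non-strict contraction.

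Next I would feed $(\delta(X),f(X))$ into Theorem~\ref{bigthm}, obtaining $\mathcal{P}$, a representation $\pi\colon\mathbf{B}(\mathcal{K})\to\mathbf{B}(\mathcal{P})$ and an isometry $\bbm A & B \\ C & D \ebm$ realizing $f(X)$ as the transfer formula in $\delta(X)$; I would let $g$ be the same transfer formula in a free variable $Z$ over $\textrm{ball}(\mathbf{B}(\mathcal{K}\otimes\mathcal{H}))$, so that $g\in\overline{\textrm{ball}}(H^{\infty}(\textrm{ball}(\mathbf{B}(\mathcal{K}\otimes\mathcal{H})),\mathcal{M}',\mathcal{N}'))$ by the standard lurking-isometry estimate and $g\circ\delta=f$ at $X$. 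Then I would expand the resolvent: for $r<1$ the Neumann series $\sum_{n\ge 0}(r(D\otimes I_{\mathcal{H}})(\pi\otimes\mathrm{id}_{\BH})(Z))^{n}$ converges uniformly on $\textrm{ball}(\mathbf{B}(\mathcal{K}\otimes\mathcal{H}))$, and its truncations give polynomials $g_{N,r}$ such that $rg_{N,r}$ is contractive on $\textrm{ball}(\mathbf{B}(\mathcal{K}\otimes\mathcal{H}))$ once $N$ is large compared to $r$; composing with $\delta$ puts $rg_{N,r}\circ\delta$ in $\textrm{ball}(P(B_\delta,\mathcal{M}',\mathcal{N}'))$, and since $\|\delta(X)\|<1$ is strict, a diagonal sequence with $r\uparrow 1$ and $N\to\infty$ gives $rg_{N,r}(\delta(X))\to g(\delta(X))=f(X)$ in norm. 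To approximate at finitely many $X_1,\dots,X_n\in B_\delta$ simultaneously, I would run this for a unitary conjugate of $\bigoplus_{i=1}^{n}X_i$, which again lies in $B_\delta$ because $\delta$ respects direct sums and $\mathcal{H}^{n}\cong\mathcal{H}$, and then use that the polynomials, being free, compress back to each $X_i$; this yields inclusion (i).

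Finally, for the global $g$ of (ii): the construction above already produces, for each finite tuple of points, a contractive transfer realization, and I would pass to a WOT-cluster point of these isometric colligations — they sit in fixed unit balls, so after embedding the auxiliary Hilbert spaces into a common one (equivalently, via the limiting form of the lurking-isometry argument indicated in Section~\ref{ballsection}) one extracts a limit — obtaining a single $g\in\overline{\textrm{ball}}(H^{\infty}(\textrm{ball}(\mathbf{B}(\mathcal{K}\otimes\mathcal{H})),\mathcal{M}',\mathcal{N}'))$ with $g\circ\delta=f$ on all of $B_\delta$. Apart from this Hilbert-space bookkeeping in the last limit and the verification in the second paragraph, the remainder is a routine assembly of Theorem~\ref{bigthm}, the freeness of invariant structure preserving functions, and the weak-closure description of $H(\mathcal{T},\mathcal{M}',\mathcal{N}')$ from Corollary~\ref{mainresult}.
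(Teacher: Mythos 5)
Your proposal follows the paper's own route almost step for step: fix $X\in B_\delta$, verify the Lyapunov-type hypothesis for the pair $(\delta(X),f(X))$ (the paper's one-line remark about unitary similarity to a point of $B_\delta$, which you correctly flag as the crux and flesh out with the $H\mapsto H+\epsilon I$ regularization to handle singular $H$ and to get a strict contraction), invoke Theorem~\ref{bigthm} to realize $f(X)$ as a transfer function in $\delta(X)$, truncate the Neumann series with the factor $r<1$ to produce contractive polynomial approximants $rg_{N,r}$, and pass to direct sums $\oplus X_i$ (unitarily conjugated back into $B_\delta$) for finitely many points. The only place you genuinely diverge is in extracting the single global $g$: the paper routes this through Proposition~\ref{gooddomains} (the established equivalence of pointwise-norm approximation with existence of a transfer realization), whereas you propose taking a WOT-cluster point of the pointwise colligations directly; both amount to the same lurking-isometry limiting argument, and your version is arguably more self-contained since it does not lean on the Archimedean/finite-dimensionality hypotheses under which Proposition~\ref{gooddomains} was stated. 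You also note the cosmetic mismatch that Theorem~\ref{bigthm} assumes all ambient spaces infinite dimensional while Theorem~\ref{biggerthm} only assumes $\mathcal{H}$ is; the enlargement-and-compression fix you suggest is the right patch, and the paper silently elides this point. In short, the proposal is correct and is essentially the paper's proof with the elisions repaired.
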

Note that we require no hypotheses on the domain containing $0$ or being connected as was the case in \cite{AGMCOP}.

\bibliography{references}
\bibliographystyle{plain}

\end{document}